\documentclass[10pt,reqno]{amsart}

\usepackage{amssymb}
\usepackage{amsfonts}
\usepackage{amsthm}
\usepackage{aliascnt}
\usepackage{amsmath}
\usepackage{bbm}
\usepackage{xcolor}
\usepackage{enumerate}
\usepackage{mathabx}
\usepackage{lipsum,booktabs}
\usepackage{cite}
\usepackage[normalem]{ulem}
\usepackage{mathtools}
\usepackage{csquotes}
\usepackage[hidelinks]{hyperref}
\usepackage[nameinlink,noabbrev]{cleveref}
\usepackage{bm}

\DeclarePairedDelimiter{\abs}{\lvert}{\rvert}
\DeclareRobustCommand{\rchi}{{\mathpalette\irchi\relax}}
\newcommand{\irchi}[2]{\raisebox{\depth}{$#1\chi$}} 
\numberwithin{equation}{section}
\allowdisplaybreaks
\newtheorem{theorem}{Theorem}[section]
\newaliascnt{proposition}{theorem}
\newtheorem{proposition}[proposition]{Proposition}
\aliascntresetthe{proposition}
\newaliascnt{lemma}{theorem}
\newtheorem{lemma}[lemma]{Lemma}
\aliascntresetthe{lemma}
\newaliascnt{corollary}{theorem}
\newtheorem{corollary}[corollary]{Corollary}
\aliascntresetthe{corollary}
\newaliascnt{conjecture}{theorem}

\aliascntresetthe{conjecture}
\theoremstyle{definition}
\newaliascnt{definition}{theorem}

\aliascntresetthe{definition}
\newaliascnt{notation}{theorem}

\aliascntresetthe{notation}
\theoremstyle{remark}
\newaliascnt{remark}{theorem}
\newtheorem{remark}[remark]{Remark}
\aliascntresetthe{remark}

\newcommand{\R}{\mathbb{R}}

\newcommand{\Rd}{\mathbb{R}^d}
\newcommand{\Sk}{\mathbb{S}^{d-1}}

\DeclareRobustCommand{\rchi}{{\mathpalette\irchi\relax}}

\renewcommand{\hat}{\widehat}

\newcommand{\scriptA}{\mathcal{A}}

\newcommand{\scriptP}{\mathcal{P}}
\newcommand{\scriptQ}{\mathcal{Q}}
\newcommand{\scriptR}{\mathcal{R}}

\newcommand{\dd}{\mathop{}\!\mathrm{d}}

\newcommand{\qtq}[1]{\quad\text{#1}\quad}

\DeclareMathOperator*{\supp}{supp}

\begin{document}
\title[Besicovitch Compression and Quantitative Endpoint Restriction]{Besicovitch Compression and a Quantitative Failure of Endpoint Fourier Restriction}
\author{Soumyajit Acharyya}

\address{Department of Mathematics, Indian Institute of Technology Bombay, Mumbai 400076}
\email{24d0785@iitb.ac.in}

\subjclass[2020]{Primary 42B10; Secondary 42B15, 42B25, 28A75}
\keywords{Fourier restriction, endpoint estimates, Kakeya sets, Besicovitch compression, moment curve}

\begin{abstract}
A classical theorem of Beckner, Carbery, Semmes, and Soria states that the Fourier restriction operator for the sphere does not satisfy the restricted weak-type estimate at the conjectured endpoint. Their proof uses the Besicovitch compression phenomenon and proceeds by contradiction, as in Fefferman's disproof of the ball multiplier conjecture. As a result, it reveals little about how badly the estimate fails. We develop a general framework that converts Besicovitch compression into a quantitative failure at an endpoint. Given a family of sets that compresses, our framework produces an explicit set that violates the bound, with a rate governed by the amount of compression. The rate is sharp within the admissible family of sets. We apply this to the sphere and the paraboloid and construct a deterministic sequence of sets along which the restricted weak-type inequality degrades at an explicitly computed rate. We also show that our construction does not provide blowup for the moment curve in dimensions three and higher, where the (much stronger) restricted strong-type bound is known to hold at the endpoint.
\end{abstract}

\date{\today}

\maketitle


\section{Introduction}

Besicovitch sets of arbitrarily small volume are frequently used in the literature to prove, via the method of contradiction, that certain classical bounds in harmonic analysis cannot be valid. Fefferman's disproof of the ball multiplier conjecture~\cite{Fefferman1971} is perhaps the most well known example of this. This same compression effect also lies at the heart of a classical result of Beckner, Carbery, Semmes, and Soria~\cite{BCSS}, who used an analogous method of contradiction to show that Stein's spherical restriction operator does not obey the restricted weak-type bound at the conjectured endpoint. As the proofs proceed by contradiction, they shed little insight into the extent to which the inequality breaks down. The aim of this article is to present a deterministic process (in \Cref{subsec: selection}) that produces such a counterexample for both the sphere and the paraboloid. This enables us to provide a quantitative rate of failure for the endpoint estimate by determining how it deteriorates as the construction is iterated. Moreover, in \Cref{sec : moment curve}, we explain why our construction fails to provide blowup at the endpoint for the moment curve in dimensions three and higher.

For $f \in L^1(\Rd)$, consider the Fourier transform
$$
\hat{f}(\xi) := \int_{\Rd} f(x) e^{- 2 \pi i x \cdot \xi} \, \dd x, \qquad \xi \in \Rd.
$$
One is interested in Lebesgue space estimates for the Fourier restriction operator for the sphere, identifying pairs $(p, q)$ with $1 \leq p, q \leq \infty$ satisfying
$$
\|\hat f\|_{L^q(\mathbb{S}^{d - 1})} \leq C_{p, q, d} \|f\|_{L^p(\Rd)}
$$
uniformly in $f$ in a dense class, say smooth $f$ with compact support, where on the above LHS we take the $d - 1$ dimensional surface measure on the sphere. This is partly because of its now well-established application to regularity questions for dispersive equations, analytic number theory, such as problems concerning solutions to the Vinogradov system, and geometric combinatorics. Standard scaling/method of stationary phase gives the necessary condition
$$
\frac{d-1}{q} \ge \frac{d+1}{p'} \qquad \text{and} \qquad p < \frac{2d}{d+1}.
$$
Thus, one is led to ask if a weaker Lebesgue space estimate holds at the endpoint $p = q = \frac{2d}{d+1}$; for then one could apply interpolation to get estimates at intermediate points, thereby proving Stein's Fourier restriction conjecture. Unfortunately, as it turns out, and this is a well-known classical result of Beckner, Carbery, Semmes, and Soria~\cite{BCSS} that even the much weaker restricted weak-type bound does not hold at this point, i.e.
$$
\sup_{0 < |E| < \infty} \frac {\|\hat{\rchi_E}\vert_{\mathbb{S}^{d - 1}}\|_{L^{\frac{2d}{d + 1}, \infty}(\mathbb{S}^{d - 1})}}{|E|^{\frac {d+1}{2d}}} = \infty.
$$ 
Here, the supremum is taken over all Lebesgue measurable $E \subset \Rd$ with nonzero finite measure. The proof in~\cite{BCSS}, however, does not quantify the failure as it proceeds through a method of contradiction.

We now proceed to state our result precisely. To do so, we present a general abstract framework for a constructive method, from which the quantitative failure result follows for both the sphere and the paraboloid. We begin by introducing the datum that serves as the input to the deterministic procedure, together with the three quantities associated with it.

\subsection{A datum and its associated set}\label{sec: comptowit}
For $1<p<2$ denote
\begin{equation*}
    \theta_p := \frac{1}{p}- \frac{1}{2}.
\end{equation*}
Let $\Sigma \subset \Rd$ be a Borel set equipped with a finite Borel measure $\mu$. Consider an ordered datum
$\scriptP := \left\{\left(Q_j, \xi_j, \Theta_j\right)\right\}_{j = 1}^N,$ where each $Q_j \subset \Rd$ is bounded and measurable, each $\Theta_j \subset \Sigma$ is $\mu$-measurable, and the point $\xi_j \in \Theta_j$. Suppose that $\abs*{Q_j} = q>0$, for $1 \le j \le N,$ and set 
\[
U : = \bigcup_{j = 1}^N Q_j, \qquad \Omega : = \bigcup_{j = 1}^N \Theta_j, \qquad a := \mu \left(\Omega \right), \qtq{and} S:= \sum_{j = 1}^N \abs*{Q_j} = Nq.
\]
Define the three quantities associated with $\scriptP$ by
\[
\begin{gathered}
     \rho(\scriptP) := \frac{|U|}{S}, \qquad
    b_p(\scriptP) := \frac{ qa^{1/p}}{S^{1/p}},
    \\[4pt]
 \qtq{and}  \kappa(\scriptP) : = \min_{1 \le j \le N} \inf_{\xi \in \Theta_j} \frac{1}{q}  \abs*{\int_{Q_j} e^{-2 \pi i x \cdot \left( \xi - \xi_j\right)}\,\dd x}.
\end{gathered}
\]

\begin{theorem}\label{thm: abstractthm}
    Assume that $a >0$ and $\kappa(\scriptP) >0$. Then there exists a bounded measurable set $E(\scriptP) \subset U$ of positive finite volume such that
     \begin{equation}\label{E : lower estimate}
     \frac{\| \hat{\rchi_{E (\scriptP)}}\|_{L^{p, \infty}(\Sigma, \mu)}}{\abs*{E (\scriptP)}^{1/p}} \ge c_p \, \kappa(\scriptP) \, b_p(\scriptP) \, \rho(\scriptP)^{-\theta_p}.
     \end{equation}
       Here $c_p$ is an absolute constant that depends only on $p$.
\end{theorem}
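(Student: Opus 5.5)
The plan is to build a randomized test function from the datum $\scriptP$, prove a restricted-type lower bound for it, and then extract a single level set. The one genuinely delicate point is choosing the random signs.

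\textbf{Test function.} For signs $\varepsilon=(\varepsilon_j)\in\{\pm1\}^N$ put
\[
f_\varepsilon(x):=\sum_{j=1}^N \varepsilon_j e^{2\pi i x\cdot\xi_j}\rchi_{Q_j}(x),
\qquad
\hat f_\varepsilon(\xi)=\sum_{j=1}^N\varepsilon_j \hat{\rchi_{Q_j}}(\xi-\xi_j).
\]
Here $f_\varepsilon$ is supported in $U$.

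\textbf{Lower bound on the Fourier side.} Fix an exponent $r<p$. If $\xi\in\Theta_j$, the $j$-th summand has modulus at least $\kappa q$, where $\kappa:=\kappa(\scriptP)$. Khintchine's inequality then gives, for every $\xi\in\Omega$,
\[
\mathbb{E}_\varepsilon|\hat f_\varepsilon(\xi)|^r\gtrsim_r\Big(\sum_j|\hat{\rchi_{Q_j}}(\xi-\xi_j)|^2\Big)^{r/2}\ge(\kappa q)^r .
\]
Integrating over $\Omega$ yields $\mathbb{E}\,A_\varepsilon\gtrsim \kappa^r q^r a$, where $A_\varepsilon:=\int_\Omega|\hat f_\varepsilon|^r\,\dd\mu$.

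\textbf{Upper bound on the physical side.} Fix $s\in(p,2)$ and write $n(x)=\sum_j\rchi_{Q_j}(x)$ for the multiplicity function. Khintchine gives $\mathbb{E}|f_\varepsilon(x)|^s\lesssim n(x)^{s/2}$. Since $s/2<1$, Hölder on $U$ gives
\[
\mathbb{E}\|f_\varepsilon\|_s^s\lesssim\int_U n^{s/2}\le|U|^{1-s/2}S^{s/2}.
\]

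\textbf{Choosing the signs.} This is the step I expect to be the main obstacle. We need one $\varepsilon$ for which $A_\varepsilon$ is large and $\|f_\varepsilon\|_s$ is small at the same time. A union or Markov argument would need second-moment control of $A_\varepsilon$, which is not available. I would instead use homogeneity. With $B_\varepsilon:=\|f_\varepsilon\|_s$,
\[
\mathbb{E}A_\varepsilon=\mathbb{E}\big[(A_\varepsilon/B_\varepsilon^r)B_\varepsilon^r\big]\le \max_\varepsilon (A_\varepsilon/B_\varepsilon^r)\,\mathbb{E}B_\varepsilon^r,
\]
and $\mathbb{E}B_\varepsilon^r\le(\mathbb{E}B_\varepsilon^s)^{r/s}$ by Jensen, since $r<s$. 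Hence some $\varepsilon$ satisfies
\[
\frac{A_\varepsilon^{1/r}}{\|f_\varepsilon\|_s}\gtrsim\frac{\kappa q a^{1/r}}{|U|^{1/s-1/2}S^{1/2}} .
\]

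\textbf{Converting to Lorentz norms.} Hölder on a finite measure space gives two comparisons:
\[
\|h\|_{L^r(\Omega)}\lesssim_{p,r} a^{1/r-1/p}\|h\|_{L^{p,\infty}(\Sigma,\mu)},
\qquad
\|g\|_{L^{p,1}}\lesssim_{p,s}|U|^{1/p-1/s}\|g\|_{L^s}\quad(\supp g\subset U).
\]
Applying these to the chosen $f=f_\varepsilon$ gives
\[
\frac{\|\hat f\|_{L^{p,\infty}}}{\|f\|_{L^{p,1}}}\gtrsim \frac{\kappa q a^{1/p}}{|U|^{\theta_p}S^{1/2}} .
\]
Now use $S^{-1/2}=S^{-1/p}S^{\theta_p}$ to rewrite the right side:
\[
\frac{\kappa q a^{1/p}}{|U|^{\theta_p}S^{1/2}}=\kappa\,\frac{q a^{1/p}}{S^{1/p}}\Big(\frac{S}{|U|}\Big)^{\theta_p}=\kappa\, b_p(\scriptP)\,\rho(\scriptP)^{-\theta_p}.
\]
Fixing, say, $r=(1+p)/2$ and $s=(p+2)/2$ makes every constant depend only on $p$.

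\textbf{Passing to a set.} Split $f$ into the positive and negative parts of its real and imaginary parts, and decompose each dyadically by level. This writes $f=\sum_k c_k\rchi_{E_k}$ with $E_k\subset U$ bounded and measurable, and $\sum_k|c_k||E_k|^{1/p}\lesssim_p\|f\|_{L^{p,1}}$.

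Since $p>1$, $L^{p,\infty}$ is normable. If every $E_k$ satisfied $\|\hat{\rchi_{E_k}}\|_{p,\infty}<\lambda|E_k|^{1/p}$, we would get $\|\hat f\|_{p,\infty}\lesssim\lambda\|f\|_{p,1}$. Hence some $E(\scriptP):=E_k$, of positive finite measure and contained in $U$, satisfies \eqref{E : lower estimate} with a constant $c_p$ depending only on $p$.
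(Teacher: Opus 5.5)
Your proof is correct. It shares the paper's skeleton: the random-sign modulated sum $f_\varepsilon$, Khintchine on both sides, a good sign vector found from first moments only, and a single set extracted using normability of $L^{p,\infty}$. The analytic core, however, is implemented differently.

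\textbf{Where the paper works.} The paper stays at the Lorentz endpoint throughout.
\begin{itemize}
\item On the physical side it proves a Rademacher square-function bound directly in $L^{p,1}$, namely $\int_0^1\|f_s\|_{L^{p,1}}\,\dd s\lesssim\|M\|_{L^{p,1}}$, using subgaussian tails, Jensen, and a dyadic summation. It then applies $\|M\|_{L^{p,1}}\lesssim\abs*{U}^{\theta_p}\|M\|_{2}$.
\item On the Fourier side it uses a Paley--Zygmund form of Khintchine: $\abs*{\hat{f_s}(\xi)}\ge\alpha_0\kappa q$ with probability at least $\alpha_0^2$. The controlled quantity is therefore a level-set measure, which is upgraded at the end via $\|H\|_{L^{1,\infty}}\le a^{1/p'}\|H\|_{L^{p,\infty}}$.
\end{itemize}

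\textbf{Where you work.} You straddle $p$ with auxiliary exponents $r<p<s<2$. You use only the classical $L^r$ and $L^s$ Khintchine inequalities pointwise, plus H\"older. You pay the Lorentz conversion once, at the end, through the embeddings $L^{p,\infty}(\Omega)\hookrightarrow L^r(\Omega)$ and $L^s(U)\hookrightarrow L^{p,1}(U)$ on finite-measure sets. The powers of $a$ and $\abs*{U}$ telescope exactly, so nothing is lost, and you avoid the Lorentz-space square-function lemma entirely.

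\textbf{Sign selection.} This is the same first-moment idea in different form. Your mediant bound $\max_\varepsilon A_\varepsilon/B_\varepsilon^r\ge\mathbb{E}A_\varepsilon/\mathbb{E}B_\varepsilon^r$, with Jensen to match $B^r$ against $B^s$, plays the role of the paper's positive-average functional $Y-c_*X$. What the paper gains from its conditional-expectation recursion and its ordering of rational levels is a canonical, ``prescribed'' choice of $\varepsilon^*$ and of $E(\scriptP)$. The theorem as stated does not require this, and you could recover it with any tie-breaking rule on the finite set $\{\pm1\}^N$.

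\textbf{Two small points to tighten.}
\begin{itemize}
\item Sign vectors with $B_\varepsilon=0$ have $A_\varepsilon=0$; exclude them from the maximum.
\item ``Decompose dyadically by level'' must give an exact identity $g=\sum_k c_k\rchi_{E_k}$. The naive choice $\sum_k 2^k\rchi_{\{g>2^k\}}$ is only comparable to $g$. Use instead the binary-digit expansion $g=\sum_k 2^k\rchi_{F_k}$ with $F_k\subset\{g\ge 2^k\}$, or, as the paper does, the layer-cake integral $g=\int_0^\infty\rchi_{\{g>\lambda\}}\,\dd\lambda$ together with Minkowski's integral inequality.
\end{itemize}
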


For the proof, we introduce a set-selection process $\scriptP \mapsto E(\scriptP)$ in \Cref{subsec: selection} and provide a deterministic construction. For every ordered datum $\scriptP$, this produces the set $E(\scriptP)$, although no closed-form description is given. We call $E(\scriptP)$ the prescribed set associated with $\scriptP$.

We now state a few applications of the above result. For this, we need to set up a few notations to be used throughout this article.

\subsection{Notation} For $\Sigma \subset \Rd$, a Borel set equipped with a measure $\mu$, the associated Fourier restriction operator is 
\[
\scriptR_{\Sigma}f := \hat{f}\big|_{\Sigma}.
\]
For a measurable function $F$ on a measure space $(X, \nu)$ and $1 \le r < \infty$, we use
\[
\begin{aligned}
    \|F\|_{L^{r, \infty}(X, \nu)} &:= \sup_{\lambda >0}\lambda \nu(\{|F| > \lambda\})^{1/r},\\
     \|F\|_{L^{r, 1}(X, \nu)} &:= \int_0^\infty \nu(\{|F| > \lambda\})^{1/r}\, \dd \lambda.
\end{aligned}
\]
When the underlying measure space is clear, it is omitted from the notation.

For $1 \le r,s < \infty$, we say that $\scriptR_\Sigma$ is of restricted weak type $(r,s)$ with respect to $\mu$ if there exists $C < \infty$ such that
\[
\|\scriptR_{\Sigma}\rchi_E\|_{L^{s, \infty}(\Sigma, \mu)} \le C|E|^{1/r}
\]
for every measurable $E \subset \Rd$ of finite measure.

For a family $\scriptQ = \left\{Q_j\right\}_{j = 1}^N$ of measurable sets, each with a positive finite volume, we define its compression ratio by 
\begin{equation*}
\text{Comp}\left(\scriptQ\right) := \frac{|\bigcup_{j=1}^{N} Q_j|}{\sum_{j=1}^{N} \abs*{Q_j}}.
\end{equation*}

We use $c$ and $C$ to denote, respectively, small and large positive implicit constants whose exact values may vary from line to line but may depend only on the dimension $d$ and the Lebesgue exponent $p$. Any dependence of implicit constants on further parameters will be indicated by additional subscripts. We write $A \lesssim B$ to mean $A \leq C B$, and $A \sim B$ to mean that both $A \lesssim B$ and $B \lesssim A$ hold.

Beckner, Carbery, Semmes, and Soria consider small spherical caps with bounded overlap and their corresponding dual parallelepipeds. C\'ordoba's sprouting construction \cite{Cordoba} allows these parallelepipeds to be translated so that the volume of their union is small relative to the sum of their volumes. They used this compression to prove the failure of the restriction operator for the sphere to be of restricted weak type at the endpoint. The same compression mechanism also forces the restricted weak-type failure of the restriction operator for the paraboloid at the endpoint. For the moment curve in two dimensions, Bak, Oberlin, and Seeger\cite{BOS}, using a result of Keich\cite{keich}, quantified the failure of the restriction operator to be of restricted weak type at the endpoint. However, in dimensions $d \ge 3$, they proved that the restriction operator is of restricted strong type at the endpoint.

Thus, we also study why the compression mechanism, which is responsible for certain endpoint failures, cannot produce the same type of failure when restricted to the moment curve in higher dimensions. We show that for the moment curve $(t, t^2, \ldots, t^d)$ in dimensions $d \ge 3$, the compression ratio of the dual parallelepipeds associated with any number of $\delta$-separated points on the real line remains bounded below by a positive constant depending only on $d$, regardless of how the parallelepipeds are translated. However, for the moment curve in dimension two, if $N\ge2$ and $t_1, \dots, t_N \in \R$ are  $\delta$-separated, then the compression ratio is at least of order $1/(\log N)$ for all translations of the corresponding parallelepipeds, and this bound is sharp. These results show that the geometry of the moment curve in two dimensions differs from that in dimensions $d \ge 3$. They do not provide a new proof of the result that the restriction operator for the moment curve is of restricted strong type at the endpoint in higher dimensions.

\begin{remark}
The power of $\rho(\scriptP)$ in \eqref{E : lower estimate} comes from the size of the union $U$. Indeed, the square function
\begin{equation}\label{eq: mul}
    M(x) := (\sum_{j=1}^N\rchi_{Q_{j}}(x))^{1/2},
\end{equation}
has support inside $U$ and has mass $\|M\|_2 = \sqrt{S}$. Since $U$ has finite measure, \Cref{lem: fsupportLP1L2} applied to the function $M$ gives
\begin{equation}\label{eq: possibleloss}
    \|M\|_{L^{p,1}} \lesssim \abs*{U}^{1/p-1/2} \|M\|_2 
     = \rho(\scriptP)^{1/p-1/2}S^{1/p}.
\end{equation}
\end{remark}

\subsection{Sharpness of the exponent \texorpdfstring{$\bm{-\theta_p}$}{minus theta p} of \texorpdfstring{$\bm{\rho(\scriptP)}$}{rho(P)}}

It is natural to ask if the exponent of $\rho$ above can be improved. Our next result shows that this is sharp within the class of data covered by \Cref{thm: abstractthm}.

\begin{proposition}\label{prop: sharpness}
    Let $d \ge 1$ and $1<p<2$. For every integer $N \ge 2$, there exists an ordered datum $\scriptP_N := \{(Q_{N,j}, \xi_{N,j}, \Theta_{N,j})\}_{j = 1}^N$, a Borel set $\Sigma_N \subset \Rd$, along with a finite Borel measure $\mu_N$ on it such that
    \[
    \rho\left(\scriptP_N\right)= \frac{1}{N}, \qquad \kappa\left(\scriptP_N\right) = 1, \qquad b_p\left(\scriptP_N\right) = 1.
    \]
 In addition, for $U_N := \bigcup_{j=1}^NQ_{N,j}$, we have
    \[
    \sup_{\substack{E \subset U_N\\ 0< \abs*{E} < \infty}}      \frac{\| \hat{\rchi_{E}}\|_{L^{p, \infty}(\Sigma_N, \mu_N)}}{\abs*{E}^{1/p}} \lesssim\rho\left(\scriptP_N\right)^{-\theta_p}.
    \]
Therefore, the exponent $-\theta_p$ of $\rho(\scriptP)$ in \Cref{thm: abstractthm} is sharp.
\end{proposition}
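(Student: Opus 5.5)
We choose the simplest possible datum: complete overlap. We take $Q_{N,j}=Q:=[0,1]^d$ for all $j$, so that $U_N=Q$, $|U_N|=1$ and $S=N$, hence $\rho(\scriptP_N)=1/N$. We let $\Sigma_N$ be a single point $\{0\}$ and set $\xi_{N,j}=0$ and $\Theta_{N,j}=\{0\}$. Then $\kappa(\scriptP_N)=\frac1q\bigl|\int_Q 1\bigr|=1$. We take $\mu_N=c_N\delta_0$, with the mass $a=c_N$ chosen so that $b_p=qa^{1/p}/S^{1/p}=(a/N)^{1/p}=1$, that is, $a=N$. If a single point seems too degenerate, any small neighborhood would also work, but the point mass is cleanest and is allowed since $\Sigma$ only needs to be Borel with a finite Borel measure. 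Since $d\ge1$ is arbitrary, nothing dimension-specific enters.

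Next we verify the upper bound. For $E\subset U_N=Q$ with $0<|E|<\infty$, we have $\hat{\rchi_E}(0)=|E|\le 1$. The weak norm on $(\{0\},N\delta_0)$ of a function with value $F(0)$ is $\sup_{\lambda<|F(0)|}\lambda N^{1/p}=|F(0)|N^{1/p}$. Therefore
\[
\frac{\|\hat{\rchi_E}\|_{L^{p,\infty}(\Sigma_N,\mu_N)}}{|E|^{1/p}}=N^{1/p}|E|^{1-1/p}\le N^{1/p}.
\]
This is $N^{1/p}$, whereas $\rho^{-\theta_p}=N^{1/p-1/2}$, so the bound is off by $N^{1/2}$. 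Complete overlap with a point mass does not work, and the main obstacle is to balance the measure.

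To repair this, we spread $\mu_N$ so that the $L^2$ orthogonality loss matches. We keep all the $Q_{N,j}=Q=[0,1]^d$, but take distinct points $\xi_{N,j}=m_j e_1$ with distinct integers $m_j$. We let $\Theta_{N,j}=\{\xi_{N,j}\}$, $\Sigma_N=\{\xi_{N,j}\}_{j=1}^N$, and $\mu_N=\sum_j\delta_{\xi_{N,j}}$, so that $a=N$ and $b_p=(N/N)^{1/p}=1$, while $\kappa=1$ trivially since $\Theta_j$ is a single point. Now, for $E\subset Q$, Plancherel on the torus (the characters $e^{-2\pi i m x_1}$ are orthonormal on $Q$) gives
\[
\sum_j|\hat{\rchi_E}(m_je_1)|^2\le\|\rchi_E\|_{L^2}^2=|E|,
\]
so $\|\hat{\rchi_E}\|_{L^2(\mu_N)}\le|E|^{1/2}$. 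Since $\mu_N(\Sigma_N)=N$ and $p<2$, Hölder in Lorentz spaces (or the fact, as in \Cref{lem: fsupportLP1L2}, that $L^{p,\infty}\le L^p\le \mu(\Sigma)^{1/p-1/2}L^2$) gives $\|\hat{\rchi_E}\|_{L^{p,\infty}(\mu_N)}\le N^{\theta_p}|E|^{1/2}$. Using $|E|\le1$, we get $|E|^{1/2-1/p}\ge$? This goes the wrong direction, since $1/2-1/p<0$ and so $|E|^{1/2-1/p}\ge1$ for small $E$. So we also use the trivial bound $|\hat{\rchi_E}|\le|E|$, which gives $\|\cdot\|_{L^{p,\infty}}\le|E|N^{1/p}$. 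We interpolate the two level-set bounds: $\mu_N(|F|>\lambda)\le\min(N,|E|/\lambda^2)$ and vanishes for $\lambda\ge|E|$. Then $\sup_{\lambda<|E|}\lambda\min(N,|E|\lambda^{-2})^{1/p}$ is attained at $\lambda=\min(|E|,(|E|/N)^{1/2})$. For $|E|\ge 1/N$ this equals $|E|^{1/p}N^{\theta_p}\cdot$const, giving a ratio of at most $N^{\theta_p}$. For $|E|<1/N$ the ratio is $N^{1/p}|E|^{1-1/p}\le N^{1/p}N^{-1+1/p}$, which is again fine when $2/p-1\le\theta_p$? We have $2/p-1=2\theta_p\ge\theta_p$, so this needs care. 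If it fails, we would rescale the cubes (take side $\ell$ with $q=\ell^d$ and lattice spacing $1/\ell$) together with the normalization of $\mu_N$ so that the point-evaluation regime is excluded. This tuning of parameters is the step I expect to need the most adjustment.
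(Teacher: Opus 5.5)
\textbf{The upper bound is not established, and the proposed repair cannot work.} Your second datum (all $Q_{N,j}=[0,1]^d$, $\Theta_{N,j}=\{m_je_1\}$, counting measure on $\Sigma_N$) is the paper's datum except for the choice of frequencies. Your values $\rho(\scriptP_N)=1/N$, $\kappa(\scriptP_N)=1$, $b_p(\scriptP_N)=1$ are correct.

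The difficulty you flag is not a matter of tuning parameters. Bessel's inequality combined with the trivial bound only gives $N^{2\theta_p}$. In the regime $|E|\ge 1/N$ the supremum you compute is $|E|^{1/2}N^{\theta_p}$, not $|E|^{1/p}N^{\theta_p}$. So the ratio is $N^{\theta_p}|E|^{-\theta_p}$, which reaches $N^{2\theta_p}$ at $|E|=1/N$. The regime $|E|<1/N$ has the same ceiling.

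For arbitrary distinct integers the desired bound is in fact false. Take $m_j=j$ and $E=[0,1/N]\times[0,1]^{d-1}$. Then $|\hat{\rchi_E}(je_1)|=|\sin(\pi j/N)|/(\pi j)\ge 2/(\pi N)$ for $1\le j\le N/2$. Hence $\|\hat{\rchi_E}\|_{L^{p,\infty}(\Sigma_N,\mu_N)}/|E|^{1/p}\gtrsim N^{-1}N^{1/p}N^{1/p}=N^{2\theta_p}$, which is much larger than $N^{\theta_p}$.

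Rescaling cannot help either. Dilate by $x\mapsto Rx$, $\xi\mapsto \xi/R$, and renormalize the mass of $\mu_N$ (by $R^{-d(p-1)}$) to keep $b_p=1$. Then the quotient $\|\hat{\rchi_E}\|_{L^{p,\infty}}/|E|^{1/p}$ is unchanged, and $\rho,\kappa$ are dilation invariant. So the small-$|E|$ regime can never be ``excluded''.

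\textbf{The missing idea is lacunarity of the frequencies.} The paper takes $\xi_{N,j}=2^je_1$. It replaces Plancherel/Bessel by the lacunary-series inequality $\|\sum_{j=1}^N v_je^{2\pi i2^jx_1}\|_{L^{p'}([0,1]^d)}\lesssim\|v\|_{\ell^2}$, valid since $p'<\infty$.

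By duality this is equivalent to $\|(\hat f(2^je_1))_{j=1}^N\|_{\ell^2}\lesssim\|f\|_{L^p([0,1]^d)}$. For $f=\rchi_E$ this gives $|E|^{1/p}$ rather than the $|E|^{1/2}$ you got from Bessel, which is exactly the homogeneity in $|E|$ you were missing. Then H\"older on $N$ points, $\|\cdot\|_{\ell^p}\le N^{\theta_p}\|\cdot\|_{\ell^2}$, together with $\|\cdot\|_{L^{p,\infty}}\le\|\cdot\|_{L^p}$, yields $\|\hat{\rchi_E}\|_{L^{p,\infty}(\Sigma_N,\mu_N)}\lesssim N^{\theta_p}|E|^{1/p}$ for every $E\subset[0,1]^d$, with no case split in $|E|$.

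The paper phrases the argument as $\|\sum_j v_je^{2\pi i2^jx_1}\|_{L^{p'}}\lesssim\|v\|_{\ell^2}\le N^{\theta_p}\|v\|_{\ell^{p'}}$ and then dualizes the resulting $\ell^{p'}\to L^{p'}$ bound. This is the same argument.
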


In fact, we will select $Q_{N,1}= Q_{N,2}= \dots= Q_{N,N} = [0,1]^d$. With $M_N$ is as defined in \eqref{eq: mul}, we have $M_N = \sqrt{N}\rchi_{[0,1]^d}$, and therefore
\[
\|M_N\|_{L^{p,1}(\Rd)} = \sqrt{N} = \rho\left(\scriptP_N\right)^{\theta_p}S_N^{1/p}.
\]
Observe that, with this example, the upper bound in \eqref{eq: possibleloss} is achieved (up to an implicit constant).

\subsection{Applications}

We apply \Cref{thm: abstractthm} to the sphere and the paraboloid to prescribe sequences of sets that make the corresponding restriction operators fail to be of restricted weak type at their diagonal endpoints. In each case, we use the geometry of the surface to construct an ordered datum $\scriptP_m$ for which $\kappa(\scriptP_m)$ and $b_p(\scriptP_m)$ remain bounded below by positive constants independent of $m$, while $\rho(\scriptP_m) \rightarrow0$. At each stage $m$, \Cref{thm: abstractthm} then prescribes $E(\scriptP_m)$ as the desired set.

\subsubsection{Sphere}

Let $\sigma$ denote the surface measure on $\Sk$, $d \ge 2$. The  diagonal endpoint for the restriction to the sphere is $(p_{d}, p_{d})$ where $p_d := 2d/(d+1)$.

For a positive integer $m$, in \Cref{sec: secsph} we use the method in C\'ordoba's sprouting construction from~\cite[Lemma 3, pp.~146--148]{Cordoba} to build an ordered datum $\scriptP_m^{\mathbb{S}} = \{(Q_{m,j}, \xi_{m,j}, \Theta_{m,j})\}_{j=1}^{2^m}$. Here $\Theta_{m,j}$ is the cap of radius $\delta_m = (m2^m)^{-1}$ around $\xi_{m,j} \in \Sk$, and $Q_{m,j}$ is a plate of dimensions $\delta_m^{-2} \times \delta_m^{-1} \times \cdots \times \delta_m^{-1}$ dual to it, positioned by the $j$-th triangle of the $m$-th sprouting stage. Let us denote
$$
E_m^{\mathbb{S}} := E(\scriptP_m^{\mathbb{S}})
$$
for the prescribed set associated with it.

\begin{theorem}\label{thm: sphere}
Let $d \ge 2$. For every sufficiently large $m$, the above set $E_m^{\mathbb{S}}$ satisfies 
  \[
    \frac{\| \hat{\rchi_{E_m^{\mathbb{S}}}}\|_{L^{p_d, \infty}(\Sk, \sigma)}}{\abs*{E_m^{\mathbb{S}}}^{1/p_d}} \ge C_\mathbb{S} \left(\frac{m}{\log m}\right)^{1/(2d)},
    \]
    where $C_\mathbb{S}>0$ is a constant independent of $m$. In particular, restriction to $\Sk$ is not of restricted weak type $\left(p_d, p_d\right)$.
\end{theorem}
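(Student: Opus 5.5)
We apply \Cref{thm: abstractthm} to the datum $\scriptP_m^{\mathbb{S}}$ with $p = p_d$, $\Sigma = \Sk$ and $\mu = \sigma$. It then suffices to check three things for large $m$: that $\kappa(\scriptP_m^{\mathbb{S}}) \ge c$, that $b_{p_d}(\scriptP_m^{\mathbb{S}}) \ge c$, and that $\rho(\scriptP_m^{\mathbb{S}}) \lesssim \log m / m$. Since $\theta_{p_d} = \frac{d+1}{2d} - \frac12 = \frac{1}{2d}$, the estimate \eqref{E : lower estimate} then gives exactly the rate $(m/\log m)^{1/(2d)}$. The final claim follows because $m/\log m \to \infty$.

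\emph{Step 1: $\kappa$.} Write $Q_{m,j} = x_j + R_j$, where $R_j$ is the plate centred at the origin with long side $\sim \delta_m^{-2}$ along the normal $\xi_{m,j}$ and short sides $\sim\delta_m^{-1}$ tangentially. Then
\[
\frac1q\Bigl|\int_{Q_{m,j}} e^{-2\pi i x\cdot(\xi-\xi_{m,j})}\,\dd x\Bigr| = \prod_k \Bigl|\frac{\sin(\pi \ell_k \eta_k)}{\pi\ell_k\eta_k}\Bigr|,
\]
where $\eta = \xi-\xi_{m,j}$ is written in the plate's axes and $\ell_k$ are its side lengths. For $\xi$ in the cap of radius $\delta_m$, the tangential components satisfy $|\eta_k|\le\delta_m$ and the normal component satisfies $|\eta_1|\lesssim\delta_m^2$. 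So if the plate dimensions are $c_0\delta_m^{-2}\times c_0\delta_m^{-1}\times\cdots$ with $c_0$ a small dimensional constant (absorbed into the construction), each factor is at least $\sin(1/2)/(1/2)$. This gives $\kappa \ge c>0$ uniformly in $m$.

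\emph{Step 2: $b_p$.} Here $q \sim \delta_m^{-(d+1)}$ and $N = 2^m$. The caps are boundedly overlapping of radius $\delta_m$ and centred at the sprouting directions. In Córdoba's construction these directions form a $\sim\delta_m$-net of an arc (for $d=2$), or, for $d\ge3$, the planar construction is crossed with the remaining $d-2$ tangential directions. We must arrange that $a = \sigma(\Omega) \sim N\delta_m^{d-1}$, i.e. that the caps are essentially disjoint and their number matches $N$. Then
\[
b_{p_d} = q\,a^{1/p_d}(Nq)^{-1/p_d} = q^{1-1/p_d}\bigl(\delta_m^{d-1}\bigr)^{1/p_d} \sim \delta_m^{-(d+1)\frac{d-1}{2d}+(d-1)\frac{d+1}{2d}} = 1.
\]
This cancellation is exactly what defines the endpoint.

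\emph{Step 3: $\rho$.} This is the main obstacle. We need the quantitative sprouting bound: after $m$ stages starting from $2^m$ triangles of base $2^{-m}$, the union of the sprouted triangles has area $\lesssim 1/m$ relative to the original. The plates, translated to sit along the triangles' rescaled directions, inherit this compression, up to an error coming from the thickness $\delta_m^{-1}$ compared with the length $\delta_m^{-2}$. The choice $\delta_m = (m2^m)^{-1}$ makes the angular width $\delta_m$ finer than the triangle aperture $2^{-m}$ by a factor $m$. This should let us cover the union of the plates by a $\delta_m^{-2}$-dilate of a $\sim 1/m$-neighborhood-type enlargement of the sprouted set, losing at most a $\log m$ factor. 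I would first try to prove $|U| \lesssim (\log m/m)\, S$ directly, using Córdoba's area estimate \cite[Lemma 3]{Cordoba} for the sprouted triangles together with a dyadic-scale count of the overlapping tails, which is where the $\log m$ enters. For $d\ge3$, the product structure in the extra directions makes the ratio unchanged. Combining the three steps with \Cref{thm: abstractthm} completes the proof.
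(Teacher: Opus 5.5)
Your reduction to \Cref{thm: abstractthm} and your Steps 1 and 2 agree with the paper. The paper gets $\kappa\ge 1/2$ from a $\cos(2\pi y)\ge 1/2$ phase bound rather than a sinc product, which is immaterial. For the bounded overlap of the caps (hence $a\gtrsim 2^m\delta_m^{d-1}$), you say the $\delta_m$-separation of the directions ``must be arranged''. It is not automatic; the paper takes it from \cite[Lemma 3]{BCSS}, $|\omega_{m,j}-\omega_{m,k}|\ge c_{\mathrm{sep}}/(m2^m)$, and you should cite that.

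The genuine gap is Step 3, which is exactly what produces the rate $(m/\log m)^{1/(2d)}$ and which you do not prove. Phrases like ``this should let us cover\ldots'' and ``a dyadic-scale count of overlapping tails'' are not an argument, and the picture behind them is wrong. Sprouting does not give a relative compression $1/m$ that then loses a $\log m$ to the plate geometry. It gives $2^m$ triangles with base $2^{-m}$, long side $\ge m$, and $|\bigcup_j T_{m,j}|\le C_{\mathrm{spr}}\log m$, so the $\log m/m$ comes entirely from C\'ordoba's bound. Nothing is lost in passing to plates. Likewise the triangles are not of aperture $2^{-m}$ relative to a finer $\delta_m$: their eccentricity is already $\gtrsim m2^m=\delta_m^{-1}$. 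So there is no mismatch to repair by an enlargement or covering argument. Such an argument would in any case be dangerous, since elongating or fattening Kakeya-type triangles generally destroys the compression.

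The missing idea is simple containment plus isotropic scaling, and it uses no new estimate.
\begin{itemize}
\item Each $T_{m,j}$ has width $\ge c_1 2^{-m}$ perpendicular to $\omega_{m,j}$ and length $\ge m$ along it. It therefore contains a rectangle $Q^2_{m,j}$ of dimensions $cm\times c2^{-m}$ aligned with $\omega_{m,j}$.
\item Hence $\sum_j|Q^2_{m,j}|=c^2m$, while $|\bigcup_j Q^2_{m,j}|\le|\bigcup_j T_{m,j}|\le C_{\mathrm{spr}}\log m$.
\item These rectangles have eccentricity $m2^m=\delta_m^{-1}$, which is precisely why $\delta_m=(m2^m)^{-1}$ is chosen. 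Dilating by $r_m=m2^{2m}$ turns them into $c\delta_m^{-2}\times c\delta_m^{-1}$ rectangles with long side along $\xi_{m,j}=(\omega_{m,j},0,\dots,0)$.
\item Taking the product with a common cube of side $c\delta_m^{-1}$ in $e_3,\dots,e_d$, followed by the fixed dilation by $\eta$ that secures $\kappa\ge 1/2$, leaves the compression ratio unchanged.
\end{itemize}
This gives $\rho(\scriptP_m^{\mathbb{S}})\le C_0\log m/m$ directly.
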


The quantities $\rho(\scriptP_m^\mathbb{S})$, $\kappa(\scriptP_m^\mathbb{S})$, and $b_{p_d}(\scriptP_m^\mathbb{S})$ associated with the datum $\scriptP_m^{\mathbb{S}}$ satisfy
\[
 \rho(\scriptP_m^\mathbb{S})\le C_0\frac{\log m}{m}, \qquad  \kappa(\scriptP_m^\mathbb{S}) \ge \frac{1}{2}, \qquad b_{p_d}(\scriptP_m^\mathbb{S}) \ge b_d,
\]
where $C_0 >0$ and $b_d >0$ are constants, independent of $m$. Since $\theta_{p_d}=1/(2d)$, the conclusion follows from \Cref{thm: abstractthm}.

\subsubsection{Paraboloid} \label{sec: parabolaintro}

Let $\Gamma_{d}(\zeta) := \left(\zeta, |\zeta|^2 \right)$ for $\zeta \in \R^{d-1}$, and $d \ge 2$. Let $I \subset \R^{d-1}$ be compact with a nonempty interior. We equip $\Gamma_{d}(I)$ with the pushforward of Lebesgue measure $\mu_{\Gamma_{d}} := (\Gamma_{d})_*(d\zeta)$. The diagonal endpoint for restriction to the paraboloid is $(p_{d}, p_{d})$.

We now employ Keich's family of triangles from~\cite[Theorem 1]{keich} and construct an ordered datum 
\[
\scriptP_m^{\Gamma_d} = \left\{\left(Q_{m,k}, \xi_{m,k}, \Theta_{m,k}\right)\right\}_{k = 0}^{2^m-1}.
\]
Here $\Theta_{m,k} = \Gamma_d(I_{m,k})$, where $I_{m,k}$ is a $d-1$ dimensional cube of length $\delta_m = 2^{-m}$, centered at $\zeta_{m,k} \in I$, and $\xi_{m,k}= \Gamma_d(\zeta_{m,k})$. $Q_{m,k}$ is a parallelepiped of dimensions $\delta_m^{-2} \times \delta_m^{-1} \times \cdots \times \delta_m^{-1}$ dual to $\Theta_{m,k}$, positioned by the $k$-th triangle in Keich's construction. Let $E_m^{\Gamma_d} := E(\scriptP_m^{\Gamma_d})$ denote the bounded set of positive finite volume prescribed by \Cref{thm: abstractthm}.

\begin{theorem}\label{thm: parabola}
    Let $d \ge 2$, and let $I \subset \R^{d-1}$ be compact with nonempty interior. For every sufficiently large $m$, $E_m^{\Gamma_d}$ satisfies
      \[
    \frac{\| \hat{\rchi_{E_m^{\Gamma_d}}} \circ \Gamma_d\|_{L^{p_d, \infty}(I,d\zeta)}}{|E_m^{\Gamma_d}|^{1/p_d}} \ge C_{\Gamma_d,I} m^{1/(2d)},
    \]
    where $C_{\Gamma_d,I}>0$ is a constant independent of $m$. Consequently, the Fourier restriction operator for the paraboloid is not of restricted weak type $\left(p_d, p_d\right)$.
\end{theorem}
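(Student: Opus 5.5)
The plan is to construct an ordered datum $\scriptP_m^{\Gamma_d}$ for which $\kappa(\scriptP_m^{\Gamma_d})$ and $b_{p_d}(\scriptP_m^{\Gamma_d})$ stay bounded below independently of $m$, while $\rho(\scriptP_m^{\Gamma_d}) \lesssim 1/m$, and then to invoke \Cref{thm: abstractthm} with $\theta_{p_d} = 1/(2d)$. After an affine change of variables (translation and dilation of $\zeta$, which only changes constants), I will assume $I$ contains a cube $[0,1]^{d-1}$. I will also assume, after a harmless normalisation, that $\mu_{\Gamma_d}$ is Lebesgue measure in $\zeta$, so that the $L^{p_d,\infty}(I, \dd\zeta)$ norm of $\hat{\rchi_E}\circ\Gamma_d$ equals the $L^{p_d,\infty}(\Gamma_d(I), \mu_{\Gamma_d})$ norm of $\hat{\rchi_E}$.

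\textbf{Step 1: the datum.} Along the first coordinate direction, take the $2^m$ points $\zeta_{m,k} = (k\delta_m + \delta_m/2, c_0, \dots, c_0)$ with $\delta_m = 2^{-m}$. The cubes $I_{m,k}$ of side $c\delta_m$ around these points are disjoint, so
\[
a = \mu\Big(\bigcup_k \Theta_{m,k}\Big) \sim 2^m \delta_m^{d-1}.
\]
Each $Q_{m,k}$ is the parallelepiped dual to $\Theta_{m,k}$. It has length $c\delta_m^{-2}$ along the normal $n_k = (-2\zeta_{m,k}, 1)$ (up to normalisation) and length $c\delta_m^{-1}$ in the $d-1$ tangent directions. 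Its volume is $q \sim \delta_m^{-(d+1)}$, independent of $k$.

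\textbf{Step 2: the bound on $\kappa$.} For $\xi = \Gamma_d(\zeta) \in \Theta_{m,k}$, Taylor expansion gives
\[
\xi - \xi_{m,k} = \big(\zeta - \zeta_{m,k},\ 2\zeta_{m,k}\cdot(\zeta-\zeta_{m,k}) + |\zeta-\zeta_{m,k}|^2\big).
\]
Its tangential component is $O(c\delta_m)$ and its normal component is $O(c^2\delta_m^2)$. Pairing with $x - x_0 \in Q_{m,k} - x_0$, where $x_0$ is the centre, the phase varies by at most a small absolute constant. Hence
\[
\frac{1}{q}\abs*{\int_{Q_{m,k}} e^{-2\pi i x\cdot(\xi-\xi_{m,k})}\,\dd x} \ge \cos(\text{small}) \ge \tfrac12
\]
for $c$ small. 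Translations do not affect this modulus, so $\kappa \ge 1/2$ for every placement of the plates.

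\textbf{Step 3: the bound on $b_{p_d}$.} With $S = 2^m q$,
\[
b_{p_d} = q^{1-1/p_d}\,(a/2^m)^{1/p_d} \sim \delta_m^{-(d+1)(d-1)/(2d)}\,\delta_m^{(d-1)(d+1)/(2d)} = 1.
\]
The bound holds independently of $m$.

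\textbf{Step 4: compression via Keich's triangles.} This is the main obstacle. I want translates of the $Q_{m,k}$ whose union has measure $\lesssim S/m$. The normal directions $n_k$ differ only in the first two coordinates: they have slope $\sim 2\zeta^{(1)}_{m,k}$ in the $(x_1,x_d)$-plane, with spacing $\sim\delta_m$, and are fixed in the remaining coordinates.

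Keich's theorem~\cite[Theorem 1]{keich} supplies $2^m$ triangles of base $2^{-m}$, with slopes $k2^{-m}$, whose union has area $\lesssim 1/m$, while their total area is $\sim 1$. From each triangle I extract a parallelogram (a $\delta_m$-tube) of comparable area in the same direction. I then apply the anisotropic dilation taking unit-length, $\delta_m$-width tubes in the $(x_1,x_d)$-plane to $\delta_m^{-2}\times\delta_m^{-1}$ rectangles. The slopes are matched to $n_k$ through the affine map in the $(x_1,x_d)$-plane that sends slope $s$ to $2s$ plus a shift; this is a fixed linear map, so it distorts volumes only by constants. Finally I take the product with a common $\delta_m^{-1}$-cube in $x_2,\dots,x_{d-1}$.

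The resulting plates are comparable to the $Q_{m,k}$. I will shrink or slightly enlarge them by constant factors so that each contains, or is contained in, a genuine dual plate, keeping $\kappa\ge 1/2$. The 2D compression $\lesssim 1/m$ then transfers by Fubini, giving $\rho \le C_0/m$. The delicate point is checking that Keich's triangles, when replaced by rectangles adapted to the normal directions of the paraboloid, keep both the compression and the equal-volume requirement $|Q_j| = q$. I would handle this by taking the inscribed parallelogram in each triangle.

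\textbf{Step 5: conclusion.} \Cref{thm: abstractthm} now gives the lower bound
\[
c\,\kappa\, b_{p_d}\,\rho^{-1/(2d)} \gtrsim m^{1/(2d)}.
\]
Since $E_m^{\Gamma_d}$ has positive finite measure for every $m$ and the ratio is unbounded as $m\to\infty$, the restriction operator for the paraboloid is not of restricted weak type $(p_d,p_d)$.
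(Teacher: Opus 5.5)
Your route is essentially the paper's: Keich's triangles, inscribed parallelograms of equal area $\delta_m/16$, one linear map sending $(1,s)$ to a multiple of $(-2t,1)$, dilation by $\delta_m^{-2}$, a product with a common cube, and then \Cref{thm: abstractthm}. Your Steps 1--3 and 5 match the paper's computations. For $d=2$ the argument works as written. For $d\ge 3$, however, Step 4 has a concrete gap.

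\textbf{The missing shear.} Your normalisation $I\supset[0,1]^{d-1}$, with the caps inside it, forces centres $\zeta_{m,k}=(\zeta^{(1)}_{m,k},c_0,\dots,c_0)$ with $c_0\neq 0$. The normals are then $n_k=(-2\zeta^{(1)}_{m,k},-2c_0,\dots,-2c_0,1)$. But the product of a plate in the $(x_1,x_d)$-plane with a cube in $x_2,\dots,x_{d-1}$ has long axis $(-2\zeta^{(1)}_{m,k},0,\dots,0,1)$. Take $x=r(-2\zeta^{(1)}_{m,k},0,\dots,0,1)$ and $\xi-\xi_{m,k}=(\tau,2\zeta_{m,k}\cdot\tau+|\tau|^2)$. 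The phase is $r\bigl(2c_0\sum_{j\ge 2}\tau_j+|\tau|^2\bigr)$. With $|r|$ up to $c\,\delta_m^{-2}$ and $|\tau_j|$ up to $c\,\delta_m$, this reaches size $\sim c^2c_0\,\delta_m^{-1}$. Hence $\kappa(\scriptP_m^{\Gamma_d})\to 0$, and your claim that the resulting plates are comparable to genuine dual plates is false. An $O(1)$ angular misalignment at length $\delta_m^{-2}$ cannot be fixed by constant-factor shrinking without destroying $q$, and with it $b_{p_d}$.

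The missing step is the paper's shear $L_{\zeta'}(x_1,x',x_d)=(x_1,x'-2\zeta' x_d,x_d)$ with $\zeta'=(c_0,\dots,c_0)$. It has determinant one and is applied to all plates at once, so the compression ratio is unchanged. It maps each long axis onto $n_k$. Equivalently, normalise so that the transverse coordinates of the centres are $0$; a translation in $\zeta$ acts on the dual plates by exactly such a shear.

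\textbf{Two smaller corrections.} First, the dilation must be isotropic, not anisotropic. Any linear $M$ with $M(1,s)\parallel(-2(\alpha+\beta s),1)$ for all $s$ (and $\beta\neq 0$) is a scalar multiple of a single matrix like the paper's $A_J$. The scalar $\delta_m^{-2}$ is what turns a $1\times\delta_m$ parallelogram into a $\delta_m^{-2}\times\delta_m^{-1}$ plate. A coordinatewise map such as $(x_1,x_2)\mapsto(\delta_m^{-2}x_1,\delta_m^{-1}x_2)$ would instead give $\delta_m^{-2}\times 1$ parallelograms, all pointing within an angle of size $\delta_m$.

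Second, only the ``shrink'' half of your adjustment is safe. The paper shrinks the caps via $c_\Theta$ and leaves the plates as constructed. Enlarging each plate about its own centre can destroy the compression: the union of concentric enlargements is not controlled by the union of the originals, as in Fefferman's construction, where the enlarged rectangles contain pairwise disjoint reaches. Enlarging can also spoil the phase bound behind $\kappa$.
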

The quantities $\rho(\scriptP_m^{\Gamma_d})$, $\kappa(\scriptP_m^{\Gamma_d})$, and $b_{p_d}(\scriptP_m^{\Gamma_d})$ associated with the datum $\scriptP_m^{\Gamma_d}$ satisfy
\[
\rho(\scriptP_m^{\Gamma_d}) < \frac{16}{m}, \qquad \kappa(\scriptP_m^{\Gamma_d}) \ge \frac{1}{2}, \qquad  b_{p_d}(\scriptP_m^{\Gamma_d}) = \beta_{d,I} >0,
\]
where $\beta_{d,I}$ is independent of $m$. Since $\theta_{p_d} = 1/(2d)$, the conclusion follows from \Cref{thm: abstractthm}.

\subsection{Moment-curve obstruction.} \label{sec: momentobs}
Let
\[
\gamma_d(t) := (t, t^2, \dots, t^d), \qquad D:= 1+2+ \dots +d = \frac{d(d+1)}{2},
\]
and let $u_1(t), u_2(t), \dots, u_d(t)$ be the basis such that
\[
\gamma_d^{(k)}(t) \cdot u_r(t) = \delta_{kr}, \qquad 1 \le r, k \le d.
\]
For $t \in \R, \delta >0$ and $c \in \Rd$, define
\[
Q_{\delta,t}(c) := c + \{\sum_{r = 1}^d b_{r}u_{r}(t) : \abs*{b_{r}} \le \delta^{-r}\ \text{for}\ 1 \le r \le d \}.
\]
For the moment curve, for each $t \in \R$, $Q_{\delta,t}(c)$ models a dual parallelepiped corresponding to an interval of length $\delta$ containing $t$. For any $B>0$, the theorem below also holds if $Q_{\delta,t}(c)$ is replaced by $c+ B(Q_{\delta,t}(c)-c)$, but the implicit constants then depend on both $d$ and $B$. We say that a set $T \subset \R$ is $\delta$-separated if $|s-t|\ge \delta$ for any $s \neq t \in T$.

\begin{theorem}\label{thm: momemntobstruct}
    Let $d \ge 3$ and $\delta>0$. Then the following statements hold.

    \begin{enumerate}
        \item[\textup{(i)}]
        If $s,t \in \R$,\ $s \neq t$, and $c_s, c_t \in \Rd$, then
        \begin{equation*} \label{eq: quad-decay}
               \abs*{Q_{\delta,t}(c_t) \cap Q_{\delta,s}(c_s)} \lesssim \frac{\delta^{-\left(D-2\right)}}{|t-s|^2}.
        \end{equation*}

        \item[\textup{(ii)}]
          If $T_\delta \subset \R$ is finite and $\delta$-separated, then, for every choice of centers $\left\{c_t: t \in T_\delta \right\} \subset \Rd$,
\begin{equation*}\label{eq: incompression}
       |\bigcup_{t \in T_\delta}Q_{\delta,t}(c_t)| \gtrsim  \sum_{t \in T_\delta} \abs*{Q_{\delta,t}(c_t)}.
\end{equation*}

    \end{enumerate}
\end{theorem}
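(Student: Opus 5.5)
Part (i) reduces to a determinant computation in the frame $\{u_r(t)\}$; part (ii) then follows from (i) by a Cauchy--Schwarz (energy) argument.

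\textbf{Part (i): the pairwise intersection bound.} By translation we may take $c_t=0$. Write $Q_{\delta,t}(0)=\{x: |x\cdot \gamma_d^{(r)}(t)|\le \delta^{-r},\ 1\le r\le d\}$. This uses the duality $\gamma_d^{(k)}(t)\cdot u_r(t)=\delta_{kr}$, up to replacing $\delta^{-r}$ by a comparable multiple, which only changes constants. The box $Q_{\delta,s}(c_s)$ is contained in a slab of the same form with $s$ in place of $t$. We bound the intersection by keeping only $d$ of the $2d$ slab constraints.

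\begin{itemize}
\item Keep the constraints $|x\cdot\gamma^{(r)}(t)|\le\delta^{-r}$ for $r=1,\dots,d-2$.
\item Keep the two constraints $|x\cdot\gamma^{(d-1)}(t)-\alpha|\le \delta^{-(d-1)}$ and $|x\cdot\gamma^{(d-1)}(s)-\beta|\le\delta^{-(d-1)}$.
\end{itemize}

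The volume of such a parallelepiped is the product of the widths divided by $|\det|$ of the $d$ normals. The normals are $\gamma^{(1)}(t),\dots,\gamma^{(d-2)}(t),\gamma^{(d-1)}(t),\gamma^{(d-1)}(s)$.

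For the determinant, Taylor-expand $\gamma^{(d-1)}(s)=\gamma^{(d-1)}(t)+(s-t)\gamma^{(d)}(t)$. This is exact, since $\gamma^{(d)}$ is constant. Hence the determinant equals $(s-t)\det(\gamma'(t),\dots,\gamma^{(d)}(t))=(s-t)\prod_k k!$, which gives only one power of $|t-s|$. The widths multiply to $\delta^{-(1+\dots+(d-1))-(d-1)}=\delta^{-(D-1)}$, so this choice yields $\delta^{-(D-1)}/|t-s|$.

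To get the squared decay we use instead:

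\begin{itemize}
\item the constraints $r=1,\dots,d-2$ from $t$ (widths $\delta^{-1},\dots,\delta^{-(d-2)}$);
\item the $r=d-2$ constraint from $s$ (width $\delta^{-(d-2)}$);
\item the $r=d-1$ constraint from $t$ (width $\delta^{-(d-1)}$).
\end{itemize}

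Since $\gamma^{(d-2)}(s)=\gamma^{(d-2)}(t)+(s-t)\gamma^{(d-1)}(t)+\tfrac{(s-t)^2}{2}\gamma^{(d)}$, reducing modulo the other normals leaves a determinant of size $\sim |s-t|^2$. The widths multiply to $\delta^{-(D-d)-(d-2)}=\delta^{-(D-2)}$, which is exactly the claimed bound. The hypothesis $d\ge3$ is needed here so that the index $d-2\ge1$ is available.

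For completeness we also record the trivial bound $|Q\cap Q'|\le |Q|\sim\delta^{-D}$. This is better than (i) when $|t-s|<\delta$, a range that is excluded in (ii) anyway.

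\textbf{Part (ii): the energy argument.} Put $F=\sum_t \rchi_{Q_t}$, $S=\sum_t|Q_t|=n\,\delta^{-D}\cdot C$ and $U=\bigcup_t Q_t$. Cauchy--Schwarz gives
\[
S^2=\Bigl(\int F\Bigr)^2\le |U|\,\|F\|_2^2=|U|\Bigl(S+\sum_{t\ne s}|Q_t\cap Q_s|\Bigr).
\]
By (i) and $\delta$-separation, each fixed $t$ contributes
\[
\sum_{s\ne t}\frac{\delta^{-(D-2)}}{|t-s|^2}\le \delta^{-(D-2)}\sum_{k\ge1}\frac{2}{(k\delta)^2}\lesssim\delta^{-D}\sim|Q_t|.
\]
Summing over $t$ gives $\|F\|_2^2\lesssim S$, and therefore $|U|\gtrsim S$.

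\textbf{Main obstacle.} The main obstacle is the determinant bookkeeping in (i). One must choose which constraints to retain so that the width product is $\delta^{-(D-2)}$ and, at the same time, the determinant degenerates only to order $|t-s|^2$. A further check is needed: when $|t-s|$ is large, the $s$-slab is used only with its own width, which is legitimate since the $s$-box is contained in that slab. The determinant of $\gamma',\dots,\gamma^{(d-1)}(t),\gamma^{(d-2)}(s)$ is verified by column reduction.
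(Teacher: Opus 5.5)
Your proposal is correct and essentially matches the paper. In (i), the paper passes to the coordinates $\Phi_t=(X_1^t,\dots,X_d^t)$ and slices with Fubini using $X_{d-2}^s-X_{d-2}^t=\tfrac{h}{2}(X_{d-1}^t+X_{d-1}^s)$; this retains exactly your $d$ constraints with normals $\gamma_d^{(1)}(t),\dots,\gamma_d^{(d-1)}(t),\gamma_d^{(d-2)}(s)$, so your determinant $\tfrac{h^2}{2}\prod_k k!$ gives the same bound in a more compact form. Part (ii) is the same Cauchy--Schwarz energy argument as the paper's, with $F=\sum_t\rchi_{Q_t}$ and the $\sum_k k^{-2}$ summation coming from $\delta$-separation.
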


 Let $I \subset \R$ be a compact interval and equip $\gamma_d(I)$ with the pushforward measure $\mu_{\gamma_d} := \left(\gamma_d\right)_*\left(dt\right)$. By a single-scale datum at scale $\delta$, we mean an ordered datum
\[
\scriptP = \{(Q_{\delta, t_j}(c_j), \gamma_d(t_j), \gamma_d(I_j)\}_{j = 1}^N,
\]
where each $I_j \subset I$ is an interval,  $t_j \in I_j$, $|I_j| \le A\delta$ for some $A>0$, and $c_j \in \Rd$. Set
\[
r_d :=  \frac{D+1}{D}, \qquad \theta_{r_d} := \frac{1}{r_d}- \frac{1}{2}.
\]
\begin{corollary}\label{cor: momcor}
        Let $d \ge 3$. Then the datum, as above, satisfies
    \begin{equation}\label{eq: cor1}
          b_{r_d}\left(\scriptP\right)^{r_d} \lesssim_A  \rho\left(\scriptP\right).
    \end{equation}
    Consequently,
    \begin{equation}\label{eq: cor2}
           \kappa(\scriptP) b_{r_d}(\scriptP) \rho(\scriptP)^{-\theta_{r_d}} \lesssim_A \rho\left(\scriptP\right)^{1/2} \lesssim_A 1.
    \end{equation}
\end{corollary}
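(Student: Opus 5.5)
The plan is to reduce \eqref{eq: cor1} to \Cref{thm: momemntobstruct}(ii), by comparing the arc measure $a$ with the number of essentially distinct parameters $t_j$. First I unwind the definitions. Writing $q=|Q_{\delta,t_j}(c_j)|$, we have
\[
b_{r_d}(\scriptP)^{r_d} = \frac{q^{r_d} a}{S} \qtq{and} \rho(\scriptP)=\frac{|U|}{S},
\]
so \eqref{eq: cor1} is equivalent to $q^{r_d} a \lesssim_A |U|$.

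Next I compute $q$. The matrix with columns $\gamma_d'(t),\dots,\gamma_d^{(d)}(t)$ is triangular (after ordering) with determinant $\prod_{k=1}^d k!$, independent of $t$. The $u_r(t)$ form the dual basis, so $|\det(u_1,\dots,u_d)| = (\prod_k k!)^{-1}$, and hence
\[
q = 2^d\Big(\prod_{k=1}^d k!\Big)^{-1}\delta^{-D} \sim \delta^{-D}.
\]
In particular $q$ is the same for all $j$, as the datum requires. Since $r_d D = D+1$, we get $q^{r_d}\sim \delta^{-D-1}$, and the goal becomes
\[
a \lesssim_A \delta^{D+1}|U|.
\]

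The datum does not assume the $t_j$ are distinct or separated, so I first extract a separated subfamily. Let $T\subset\{t_1,\dots,t_N\}$ be a maximal $\delta$-separated subset. Each $t_j$ then lies within $\delta$ of some $s\in T$. Since $t_j\in I_j$ and $|I_j|\le A\delta$, we have $I_j\subset[s-(A+1)\delta,\,s+(A+1)\delta]$. Because $\mu_{\gamma_d}(\gamma_d(J)) = |J|$ for intervals $J\subset I$, this gives
\[
a=\mu_{\gamma_d}\Big(\bigcup_j \gamma_d(I_j)\Big)\le \Big|\bigcup_j I_j\Big| \le (2A+2)\,\delta\,|T|.
\]
For each $s\in T$ fix one index $j(s)$ with $t_{j(s)}=s$ and center $c_s:=c_{j(s)}$. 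Applying \Cref{thm: momemntobstruct}(ii) to $T$ gives
\[
|U|\ge \Big|\bigcup_{s\in T}Q_{\delta,s}(c_s)\Big| \gtrsim |T|\,\delta^{-D}.
\]
Combining the two displays, $\delta^{D+1}|U|\gtrsim |T|\delta \gtrsim_A a$, which proves \eqref{eq: cor1}.

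For \eqref{eq: cor2}, note that $\kappa(\scriptP)\le 1$ trivially, since $\big|\int_{Q_j}e^{-2\pi i x\cdot(\xi-\xi_j)}\,\dd x\big|\le q$. Also $\rho(\scriptP)\le 1$ because $|U|\le S$. By \eqref{eq: cor1}, $b_{r_d}(\scriptP)\lesssim_A\rho(\scriptP)^{1/r_d}$. Since $\frac{1}{r_d}-\theta_{r_d}=\frac12$, it follows that
\[
\kappa(\scriptP)\, b_{r_d}(\scriptP)\, \rho(\scriptP)^{-\theta_{r_d}} \lesssim_A \rho(\scriptP)^{1/2}\le 1.
\]
The only genuine input is \Cref{thm: momemntobstruct}(ii). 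The main point needing care is the reduction to a $\delta$-separated subfamily, because repeated or clustered $t_j$ inflate $N$ without increasing $a$. The maximal-separated-subset argument handles this, at the cost of the $A$-dependent constant.
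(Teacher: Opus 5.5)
Your proposal is correct and follows the paper's proof essentially step for step. A maximal $\delta$-separated subfamily bounds $a$ by $(2A+2)\delta|T|$ (so $b_{r_d}(\scriptP)^{r_d}\lesssim_A |T|/N$), \Cref{thm: momemntobstruct}(ii) gives $|U|\gtrsim |T|\delta^{-D}$ (so $\rho(\scriptP)\gtrsim |T|/N$), and then $\kappa(\scriptP)\le 1$ together with $1/r_d-\theta_{r_d}=1/2$ yields \eqref{eq: cor2}. Your explicit computation of $q$ and your remark that $\rho(\scriptP)\le 1$ (which the paper uses implicitly in its final $\lesssim_A 1$) are useful clarifications but do not change the argument.
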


\begin{remark}
The above corollary, in particular, states that no sequence $(\scriptP_m)_{m \ge 1}$ of such data can make the lower bound in \Cref{thm: abstractthm} diverge. If $\rho(\scriptP_m) \rightarrow 0$, then the product on the LHS of \eqref{eq: cor2} also goes to zero.
\end{remark}

To prove \eqref{eq: cor1}, we will choose a maximal $\delta$-separated subcollection of the points $t_1, \dots, t_N$. Maximality gives an upper bound for $ b_{r_d}(\scriptP)$, while \Cref{thm: momemntobstruct}(ii) provides a lower bound for $\rho(\scriptP)$. Combining \eqref{eq: cor1} with the elementary bound $\kappa(\scriptP) \le 1$ yields \eqref{eq: cor2}. The details are given in \Cref{sec : moment curve}.

Our results add to the growing body of literature on sharp quantitative bounds in harmonic analysis, where one asks how badly an estimate fails. In the plane, C\'ordoba \cite{Cordoba1977} established a logarithmic bound for the Kakeya maximal operator, and in \cite[Theorem~1]{keich} Keich showed that this is sharp. Taken together, these results pinpoint the maximum amount of compression possible in a Besicovitch configuration in the plane. Bak, Oberlin and Seeger \cite[(1.2)]{BOS} then used Keich’s theorem to quantify the extent to which the restricted weak-type inequality breaks down at the endpoint for the parabola. Our \Cref{thm: sphere,thm: parabola} provide an analogous failure rate at the conjectured endpoint for the sphere and the paraboloid in all dimensions. A closely related problem is to determine the size of the loss in an inequality that is only known up to an $\varepsilon$ margin. Decoupling is the primary example: the Bourgain–Demeter decoupling theorem~\cite{BourgainDemeter2015} includes an implicit constant of size $O_\varepsilon(\delta^{-\varepsilon})$. Extracting an explicit dependence of the implicit constant is a distinct and difficult issue. Li \cite[Theorem~1.1]{Li2020}, \cite[Theorem~1.1]{Li2021} accomplished this for the parabola, and later, Guth, Maldague and Wang \cite[Theorem~1.1]{GuthMaldagueWang2024} sharpened the loss to a power of $\log(1/\delta)$.

Much of the literature is tied to sharp versions of valid estimates. In this setting, the focus has been on determining the optimal constant, whether there are functions that achieve it and the smoothness of extremizers. This remains difficult even for the Stein-Tomas inequality. Christ and Shao proved the existence of extremizers \cite[Theorem~1.2]{ChristShao2012a} and later established their smoothness in \cite[Theorem~1.1]{ChristShao2012b}; Frank, Lieb, and Sabin \cite[Theorem~1.1]{FrankLiebSabin2016} obtained a dichotomy type result by establishing a lower threshold for the operator norm, ensuring that extremizing sequences are precompact. The sharp constant is known only in a very small number of highly symmetric situations, for instance, for Strichartz inequalities on the paraboloid \cite[Theorems~1.1 and~1.4]{Foschi2007} and on the cone \cite[Theorems~1.5 and~1.7]{Foschi2007}, as well as for the sphere in mixed-norm settings \cite{CarneiroOSousa2019}. For the restriction problem proper, Stovall \cite[Theorem~1.1]{Stovall2020} proved the existence of extremizers for the entire non-endpoint range of $L^p \to L^q$ inequalities for the paraboloid, assuming the restriction conjecture. For curves, the picture is less complete. Drury \cite{Drury1985} resolved the restriction theory for the moment curve, but the optimal constant is still unknown in every dimension. What is known is that extremizers do exist \cite{BiswasStovall2023}, and that sharp estimates are available for monomial curves \cite{BiswasStovall2025}. This was further explored in the finite field case motivated in part by the work of Mockenhaupt and Tao \cite{MockenhauptTao2004}. In that framework, optimal constants have been determined for the parabola, the paraboloid, and certain cones \cite{GonzalezOliveira2024, GonzalezIsmoilov2026}, and also for the moment curve \cite[Theorems~2 and~3]{BCFOST2026} in some special cases.

\begin{remark}
We note that the analog of our results holds for all hypersurfaces with fixed positive Gaussian curvature and for any curve with nonvanishing affine arclength in dimensions three and higher, and the proofs carry over after a local change of coordinates. However, we do not give the details here and leave them to the interested reader.
\end{remark}

\subsection{\textbf{Organization of the paper.}} In \Cref{sec: sec2} we prove \Cref{thm: abstractthm}. In \Cref{sec: sec3} we apply \Cref{thm: abstractthm} to the sphere and the paraboloid, proving \Cref{thm: sphere,thm: parabola}. \Cref{thm: momemntobstruct} and \Cref{cor: momcor} are proved in \Cref{sec : moment curve}. Finally, in \Cref{sec : sec5} we prove \Cref{prop: sharpness}.

\section{Proof of \texorpdfstring{\Cref{thm: abstractthm}}{Theorem 1.1}}\label{sec: sec2}

We now state the selection process $\scriptP \mapsto E(\scriptP)$. The argument has three steps. First, we consider a special class of functions with random signs and show that, upon averaging over the signs, their Fourier transforms have large magnitudes on a set of large measure. Second, we use another averaging argument to define the signs one by one and select a unique function from the class considered in the first step. Finally, we extract a uniquely determined level set from the complex-valued function obtained from the second step.

\subsection{Estimates for averaging over Rademacher signs}
Let $r_j,\ j \ge 1$ denote the Rademacher functions on $[0,1]$, see \cite[Appendix C.$1$]{Grafakos} for a precise definition. We consider the family of functions
\begin{equation}\label{eq: sourcefuncs}
    f_s(x) : = \sum_{j = 1}^N r_j(s) e^{2 \pi i x \cdot \xi_j} \rchi_{Q_j}(x)
\end{equation}
indexed by $s \in [0,1]$. Each $f_s$ has support inside $U$. For $1 \le j \le N$, set
\[
A_j(\xi) := \int_{Q_j} e^{-2 \pi i x \cdot \left( \xi - \xi_j\right)}\,\dd x, \qquad \xi \in \Sigma,
\]
so that $\hat{f_s} = \sum_{j = 1}^N r_j(s) A_j$.
\begin{lemma}\label{lem: khintchine}
    Given $z_1, z_2, \ldots, z_N \in \mathbb{C}$, define $Z(s) := \sum_{j=1}^N r_j(s) z_j$ and let $B := \bigl(\sum_{j=1}^N |z_j|^2\bigr)^{1/2}$. Then there is an absolute constant $\alpha_0$ such that
\[
\left|\left\{\, s \in [0,1] : |Z(s)| \ge \alpha_0 B \,\right\}\right| \ge \alpha_0^2 .
\]
\end{lemma}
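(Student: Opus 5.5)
The plan is to run a Paley--Zygmund argument based on the second and fourth moments of $Z$. Throughout, integrate over $s\in[0,1]$ with Lebesgue measure; the Rademacher functions are then independent symmetric $\pm1$ random variables. If $B=0$ then $Z\equiv 0$, and the claim is trivial for any $\alpha_0\le 1$, since the set in question is all of $[0,1]$. So assume $B>0$.

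First compute the second moment. Orthonormality of $\{r_j\}$ gives
\[
\int_0^1 |Z(s)|^2\,\dd s=\sum_{j,k} z_j\overline{z_k}\int_0^1 r_jr_k\,\dd s = B^2 .
\]

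Next bound the fourth moment, which is the Khintchine-type step. Write $z_j=x_j+iy_j$, so that $Z=X+iY$ with $X=\sum r_jx_j$ and $Y=\sum r_jy_j$ real Rademacher sums. Expanding and using $\int r_jr_kr_lr_m=0$ unless the indices pair up, we get
\[
\int X^4\le 3\Bigl(\sum x_j^2\Bigr)^2,
\]
and similarly for $Y$. Since $|Z|^4=(X^2+Y^2)^2\le 2(X^4+Y^4)$, it follows that
\[
\int_0^1|Z|^4\,\dd s\le 6\Bigl(\bigl(\textstyle\sum x_j^2\bigr)^2+\bigl(\textstyle\sum y_j^2\bigr)^2\Bigr)\le 6B^4 .
\]
Alternatively, one could simply cite Khintchine's inequality from \cite[Appendix C]{Grafakos}.

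Finally apply Paley--Zygmund. Let $F=\{s:|Z(s)|\ge \alpha B\}$. Split the second moment over $F$ and its complement, and use Cauchy--Schwarz on $F$:
\[
B^2=\int|Z|^2\le \alpha^2B^2+\int_F|Z|^2\le \alpha^2B^2+|F|^{1/2}\Bigl(\int|Z|^4\Bigr)^{1/2}\le \alpha^2B^2+\sqrt6\,|F|^{1/2}B^2 .
\]
This gives $|F|\ge (1-\alpha^2)^2/6$. It remains to choose a single $\alpha_0$ with $\alpha_0^2\le (1-\alpha_0^2)^2/6$. For instance $\alpha_0=1/3$ works, since $1/9\le (8/9)^2/6\approx 0.1317$.

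The only nontrivial input is the fourth-moment bound. It is routine once the complex sum is split into real and imaginary parts, so I expect no genuine obstacle.
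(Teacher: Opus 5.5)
Your argument is correct, but it uses a different pair of moments from the paper's.

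The paper splits the \emph{first} moment. It takes the lower Khintchine inequality $\int_0^1|Z|\,\dd s\ge K_1B$ as a black box, bounds the contribution off $E=\{|Z|\ge (K_1/2)B\}$ by $(K_1/2)B$, and controls $\int_E|Z|$ by Cauchy--Schwarz against the exact identity $\int_0^1|Z|^2\,\dd s=B^2$. This gives $\alpha_0=K_1/2$.

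You instead split the \emph{second} moment, which is known exactly. You control the piece on $F$ by Cauchy--Schwarz against an upper bound for the fourth moment, which you verify by a direct pairing computation after separating real and imaginary parts. Your computations check out: $\int X^4=3(\sum x_j^2)^2-2\sum x_j^4$, the bound $\int|Z|^4\le 6B^4$, and the choice $\alpha_0=1/3$ with $1/9\le 64/486$.

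What each route buys:
\begin{itemize}
\item The paper's version is shorter once Khintchine is granted. However, it relies on the harder, lower direction of Khintchine in $L^1$, and tacitly on its validity for complex coefficients.
\item Your version needs only the elementary upper $L^4$ bound. It is fully self-contained and gives an explicit numerical constant.
\end{itemize}
In fact the $L^1$ lower bound is usually derived from exactly your $L^4$ estimate via H\"older, $\|Z\|_{2}\le\|Z\|_{1}^{1/3}\|Z\|_{4}^{2/3}$. So your route simply skips that intermediate step and applies the moment comparison directly at the level of $L^2$ and $L^4$.
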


\begin{proof}
    If $B = 0$, the assertion is immediate. So we may assume that $B >0$. By Khintchine's inequality, there exists a constant $K_1 >0$ such that 
    $$
        \int_0^1 \abs*{Z(s)}\,\dd s \ge K_1 B.
$$
 Considering the measurable set $E:=\left\{ s \in [0,1]: \abs*{Z(s)} \ge \left(K_1/2\right)B\right\}$, we decompose 
\[
 \int_0^1 \abs*{Z(s)}\,\dd s =  \int_E \abs*{Z(s)}\,\dd s + \int_{E^c} \abs*{Z(s)}\,\dd s.
\]
The second term in RHS is bounded above by $\left(K_1/2\right)B$. On the other hand, by Cauchy–Schwarz and the orthogonality of the Rademacher functions, the first term is bounded above by $B \abs*{E}^{1/2}$. In summary, we conclude that 
$$
K_1 B\le B \abs*{E}^{1/2} + \frac{K_1}{2}B.
$$
Thus, we arrive at the desired conclusion with $\alpha_0 = K_1/2$.
\end{proof}

Now set $\tau := \alpha_0 \kappa(\scriptP) q$ and $Y(s) := \mu ( \{\xi \in \Omega: |\hat{f_s}(\xi)| \ge \tau \})$. For each $\zeta \in \Omega$, we choose a point $\xi \in \Theta_{j(\zeta)}$. We recall that by construction $|A_{j(\zeta)}(\xi)| \ge \kappa(\scriptP) q$, and so
    \[
   ( \sum_{j = 1}^N \abs*{A_j(\xi)}^2 )^{1/2} \ge \kappa(\scriptP) q.
    \]
    Applying \Cref{lem: khintchine} with $z_j := A_j(\xi)$, we get
    \[
    |\{ s \in [0,1]:|\hat{f_s}(\xi)| \ge \tau \}|\ge \alpha_0^2.
    \]
This, in turn, by Tonelli's theorem, gives us
    \begin{equation}\label{eq: outputbound}
           \int_0^1 Y(s)\,\dd s = \int_{\Omega} |\{ s \in [0,1]:|\hat{f_s}(\xi)| \ge \tau \}|\,\dd \mu(\xi) \ge \alpha_0^2 a.
    \end{equation}
We also need to control the average $L^{p,1}$-norm of the family $\{f_s\}$. We obtain this from the following estimate.

\begin{lemma}\label{lem: avgsource}
Let $1 < p < \infty$ and $g_1, \dots , g_N$ be measurable functions on $\Rd$. For $s \in [0,1]$, set
    \[
    F_s(x) := \sum_{j = 1}^Nr_j(s)g_j(x), \qtq{and} G(x):= (\sum_{j = 1}^N \abs*{g_j(x)}^2)^{1/2}.
    \]
    Then the following square root cancellation holds:
    \begin{equation*}
        \int_0^1\|F_s\|_{L^{p, 1}\left(\Rd\right)}\,\dd s \lesssim \|G\|_{L^{p, 1}\left(\Rd\right)}.
    \end{equation*}
\end{lemma}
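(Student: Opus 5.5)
We will prove the lemma by splitting $\Rd$ into dyadic level sets of the square function $G$. On each level set the $L^{p,1}$ norm can be traded for an $L^r$ norm with $r>p$, and there Khintchine's inequality applies pointwise in $x$. The plan has three ingredients.

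\emph{(a) Normability.} For $1<p<\infty$ the quasinorm $\|\cdot\|_{L^{p,1}}$ is equivalent, up to constants depending on $p$, to a genuine norm (for instance the one defined through the maximal function $f^{**}$). Consequently, for any decomposition $F=\sum_k F_k$,
\[
\|F\|_{L^{p,1}}\lesssim \sum_k \|F_k\|_{L^{p,1}},
\]
including countable sums, by Fatou.

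\emph{(b) A Hölder-type bound on sets of finite measure.} Fix some $r>p$, say $r=2p$. If $H$ is supported on a set $E$ of finite measure, then $H^*(t)=0$ for $t>|E|$. Writing $\|H\|_{L^{p,1}}\sim\int_0^{|E|} t^{1/p-1}H^*(t)\,\dd t$ and applying Hölder's inequality with exponents $r,r'$ gives
\[
\|H\|_{L^{p,1}}\lesssim \|H\|_{L^r}\Bigl(\int_0^{|E|}t^{(1/p-1)r'}\dd t\Bigr)^{1/r'}\lesssim |E|^{1/p-1/r}\|H\|_{L^r}.
\]
The integral is finite precisely because $(1/p-1)r'>-1$, which is equivalent to $r>p$. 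This is the same mechanism as \Cref{lem: fsupportLP1L2} with $2$ replaced by $r$, so that result can be quoted directly when $p<2$.

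\emph{(c) The decomposition and the averaging.} Set $E_k:=\{x: 2^k<G(x)\le 2^{k+1}\}$ for $k\in\mathbb{Z}$, and let $H_{s,k}:=F_s\rchi_{E_k}$. Where $G=0$ every $g_j$ vanishes, so $F_s=0$ there too; hence $F_s=\sum_k H_{s,k}$. By (a) and (b),
\[
\int_0^1\|F_s\|_{L^{p,1}}\dd s\lesssim \sum_k |E_k|^{1/p-1/r}\int_0^1\|H_{s,k}\|_{L^r}\dd s .
\]
Next we use Jensen's inequality, then Tonelli's theorem, then Khintchine's inequality pointwise in $x$:
\[
\int_0^1\|H_{s,k}\|_{L^r}\dd s\le\Bigl(\int_{E_k}\int_0^1\Bigl|\sum_j r_j(s)g_j(x)\Bigr|^r\dd s\,\dd x\Bigr)^{1/r}\lesssim\Bigl(\int_{E_k}G^r\Bigr)^{1/r}\le 2^{k+1}|E_k|^{1/r}.
\]
Hence the average is bounded by a constant times
\[
\sum_k 2^k|E_k|^{1/p}\le\sum_k 2^k\bigl|\{G>2^k\}\bigr|^{1/p}\lesssim\int_0^\infty\bigl|\{G>\lambda\}\bigr|^{1/p}\dd\lambda=\|G\|_{L^{p,1}},
\]
where the last comparison holds because the distribution function is monotone in $\lambda$.

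The main point to handle carefully is (a), since $L^{p,1}$ as defined in the paper is only a quasinorm. The countable subadditivity with a constant is exactly what forces $p>1$. Once (a) is in place, the rest is routine.
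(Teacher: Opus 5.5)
Your proof is correct, and it takes a genuinely different route from the paper's. The paper never uses normability of $L^{p,1}$. It applies Jensen's inequality to the concave map $u\mapsto u^{1/p}$ to move the $s$-average inside the distribution function, $\int_0^1\|F_s\|_{L^{p,1}}\,\dd s\le\int_0^\infty\bigl(\int_0^1 d_{F_s}(\lambda)\,\dd s\bigr)^{1/p}\dd\lambda$. It then bounds $\int_0^1 d_{F_s}(\lambda)\,\dd s$ by Tonelli and the subgaussian tail $4\exp(-\lambda^2/(4G(x)^2))$, pointwise in $x$. Finally it splits $\{G>0\}$ into shells $2^{-k-1}\lambda<G\le 2^{-k}\lambda$ \emph{relative to the level} $\lambda$, using $(\sum_k a_k)^{1/p}\le\sum_k a_k^{1/p}$ and letting $e^{-4^{k-1}/p}$ beat the factor $2^{k+1}$ produced by rescaling $\lambda$. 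You instead split by \emph{absolute} dyadic levels of $G$ in $x$-space and recombine the pieces with the triangle inequality in $L^{p,1}$. On each piece you exchange $L^{p,1}$ for $L^r$ through the local H\"older embedding, so that the moment form of Khintchine's inequality applies. The paper's argument stays entirely at the level of distribution functions and needs no structural facts about Lorentz spaces. Yours is more modular and shows that the probabilistic input needed is a single moment bound of order $r>p$. The paper's argument would in fact also close with a Chebyshev tail $(G/\lambda)^r$ with $r>p$, so the two inputs are of comparable strength.

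Two small corrections to your commentary. With the paper's normalization, $\int_0^\infty d_f(\lambda)^{1/p}\,\dd\lambda=\frac1p\int_0^\infty t^{1/p-1}f^*(t)\,\dd t$, and the weight $t^{1/p-1}$ is nonincreasing when $p\ge1$. So the functional is already a norm and (a) holds with constant $1$. For the same reason, normability is not what forces $p>1$: your argument with $r=2$ works verbatim at $p=1$. Also, (b) needs $|E_k|<\infty$. This holds because $d_G(\lambda)<\infty$ for every $\lambda>0$ whenever $\|G\|_{L^{p,1}}<\infty$, which is the only nontrivial case.
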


\begin{proof}
Applying the estimate in \cite[Appendix~C.3, pp.~586--587]{Grafakos} to the real and imaginary parts, for each level set at level $\lambda > 0$ we obtain 
\begin{equation}\label{eq: complextail}
        |\{s: |\sum_{j = 1}^N r_j(s) z_j| > \lambda\}| \le 4 \exp (- \frac{\lambda^2}{4 \sum_{j=1}^N \abs*{z_j}^2})
\end{equation}
for any finitely many complex numbers $z_1, \ldots, z_N$. For a measurable function $h$ on $\Rd$, write $ d_h(\lambda) := |\{x \in \Rd: \abs*{h(x)} > \lambda\}|$.

Jensen's inequality applied to the concave function $u \mapsto u^{1/p}$ yields
\begin{equation}\label{eq: sourceconcave}
         \int_0^1\|F_s\|_{L^{p, 1}}\,\dd s = \int_0^\infty \int_0^1  d_{F_s}(\lambda)^{1/p}\, \dd s \dd \lambda \le \int_0^\infty (\int_0^1  d_{F_s}(\lambda)\, \dd s)^{1/p}\, \dd \lambda.
    \end{equation}
We now apply \eqref{eq: complextail} with $z_j = g_j(x)$ to get
\[
\int_0^1d_{F_s}(\lambda)\, \dd s \le 4\int_{\left\{G>0\right\}}\exp(- \frac{\lambda^2}{4 G(x)^2})\,\dd x.
\]
Let us denote the above RHS as $H(\lambda)$. For fixed $\lambda >0$, we decompose the set
\[
\{G>0 \} = \{G>\lambda \} \bigcup_{k=0}^\infty \{2^{-k-1}\lambda < G \le 2^{-k}\lambda \}.
\]
  On the set $\{2^{-k-1}\lambda < G \le 2^{-k}\lambda \}$, we have $\lambda^2/4G^2 \ge 4^{k-1}$; hence
  \[
  H(\lambda) \le 4d_G(\lambda) + 4 \sum_{k = 0}^\infty e^{{-4}^{k-1}}d_G(2^{-k-1}\lambda).
  \]
Changing variables $u = 2^{-k-1}\lambda$, we obtain
  \[
  \begin{aligned}
       \int_0^\infty H(\lambda)^{1/p}\,\dd \lambda &\lesssim\int_0^\infty d_G(\lambda)^{1/p}\,\dd \lambda\ +  \sum_{k = 0}^\infty 2^{k+1}e^{-4^{(k-1)}/p} \int_0^\infty d_G(u)^{1/p}\,\dd u \lesssim\|G\|_{L^{p,1}}.
  \end{aligned}
 \]
 Combining this with \eqref{eq: sourceconcave} finishes the proof.
\end{proof}

 Applying the above \Cref{lem: avgsource} for $ g_j(x) := e^{2 \pi i x \cdot \xi_j} \rchi_{Q_j}(x)$, we deduce that
\begin{equation}\label{eq: avsoimp}
        \int_0^1\|f_s\|_{L^{p, 1}\left(\Rd\right)}\,\dd s \lesssim \|M\|_{L^{p, 1}\left(\Rd\right)},
    \end{equation}
    where $M$ is as defined in \Cref{sec: comptowit}.

We will make use of the following basic estimate for functions whose support has finite volume.

\begin{lemma}\label{lem: fsupportLP1L2}
    Let $1<p<2$. Let $h \in L^2(\Rd)$ be supported in a set $V \subset \Rd$ of finite measure, then it holds that
    \[
    \|h\|_{L^{p,1}(\Rd)} \lesssim |V|^{1/p-1/2}\|h\|_{L^{2}(\Rd)}.
    \]
\end{lemma}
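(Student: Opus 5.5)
We work directly with the distribution function $d_h(\lambda) = |\{|h| > \lambda\}|$ and the definition
\[
\|h\|_{L^{p,1}} = \int_0^\infty d_h(\lambda)^{1/p}\,\dd\lambda .
\]
Two bounds on $d_h$ are available. The support bound is $d_h(\lambda) \le |V|$ for all $\lambda$. Chebyshev's inequality in $L^2$ gives $d_h(\lambda) \le \lambda^{-2}\|h\|_2^2$. We use the better of the two, so that
\[
d_h(\lambda)^{1/p} \le \min\bigl(|V|^{1/p},\ \lambda^{-2/p}\|h\|_2^{2/p}\bigr).
\]

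We split the $\lambda$-integral at the crossover point $\lambda_0 := \|h\|_2 |V|^{-1/2}$, where the two bounds agree. If $|V| = 0$ or $\|h\|_2 = 0$, then $h = 0$ a.e. and there is nothing to prove, so we assume both are positive.

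On $(0,\lambda_0)$ we use the support bound. This contributes
\[
\lambda_0 |V|^{1/p} = |V|^{1/p - 1/2}\|h\|_2 .
\]

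On $(\lambda_0,\infty)$ we use the Chebyshev bound. Since $p < 2$, we have $2/p > 1$, so $\lambda^{-2/p}$ is integrable at infinity. This is exactly where the hypothesis $1 < p < 2$ enters. The contribution is
\[
\|h\|_2^{2/p}\,\frac{\lambda_0^{1-2/p}}{2/p - 1} = \frac{p}{2-p}\,\|h\|_2^{2/p}\,\|h\|_2^{1-2/p}\,|V|^{(2/p - 1)/2},
\]
which simplifies to $\frac{p}{2-p}\,|V|^{1/p - 1/2}\|h\|_2$.

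Adding the two pieces gives
\[
\|h\|_{L^{p,1}} \le \Bigl(1 + \frac{p}{2-p}\Bigr)|V|^{1/p - 1/2}\|h\|_2 ,
\]
with a constant depending only on $p$. No step here is a real obstacle. The only point needing care is the convergence of the tail integral, which holds precisely because $p < 2$; the constant blows up as $p \to 2$, consistent with the statement being restricted to $1 < p < 2$.
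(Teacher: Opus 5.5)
Your proof is correct and is essentially the paper's own argument: bound $d_h(\lambda)\le\min\bigl(|V|,\lambda^{-2}\|h\|_2^2\bigr)$, split the $\lambda$-integral at $\lambda_0=\|h\|_2|V|^{-1/2}$, and use $p<2$ for convergence of the tail. Your explicit constant $1+\frac{p}{2-p}$ checks out.
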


\begin{proof}
For $h = 0$, the conclusion is immediate. We may assume that $h \neq 0$, and so $|V|>0$. Now, for each $\lambda > 0$, clearly
    \[
    d_h(\lambda) \le \min (\abs*{V}, \lambda^{- 2} \|h\|_{L^{2}(\Rd)}^2).
    \]
Write $\lambda_0 = \|h\|_{L^{2}(\Rd)}\abs*{V}^{-1/2}$. Since $p<2$, we get 
\[
 \begin{aligned}
            \|h\|_{L^{p,1}(\Rd)} &\le \int_0^{\lambda_0} \abs*{V}^{1/p}\, \dd \lambda\ +\ \|h\|_{L^{2}(\Rd)}^{2/p} \int_{\lambda_0}^{\infty}\lambda^{-2/p}\, \dd \lambda \lesssim \abs*{V}^{1/p-1/2}\|h\|_{L^{2}(\Rd)},
    \end{aligned}
\]
completing the proof.
\end{proof}

Combining \eqref{eq: avsoimp} with \eqref{eq: possibleloss}, we obtain

    \begin{equation}\label{eq: sourcebound}
        \int_0^1\|f_s\|_{L^{p, 1}\left(\Rd\right)}\,\dd s \lesssim \rho(\scriptP)^{\theta_p}S^{1/p}.
    \end{equation}
    We fix an implicit constant, which we denote by $C_p^{\mathrm{av}}$, so that the above estimate \eqref{eq: sourcebound} holds.
   \subsection{A deterministic selection of signs}
We now use the two estimates \eqref{eq: outputbound} and \eqref{eq: sourcebound} to choose a sign vector $\varepsilon^* = \left( \varepsilon_1^*, \varepsilon_2^*, \dots , \varepsilon_N^* \right) \in \left\{1, -1 \right\}^N$ and uniquely determine one function from the family \eqref{eq: sourcefuncs}.  

For each  $\varepsilon = \left( \varepsilon_1, \varepsilon_2, \dots , \varepsilon_N \right) \in \left\{1, -1 \right\}^N$, define
\[
   f_\varepsilon(x) : = \sum_{j = 1}^N \varepsilon_j e^{2 \pi i x \cdot \xi_j} \rchi_{Q_j}(x),
\]
and put
\[
   X(\varepsilon):=\|f_\varepsilon\|_{L^{p, 1}\left(\Rd\right)}, \qquad Y(\varepsilon) := \mu ( \{\xi \in \Omega: |\hat{f_\varepsilon}(\xi)| \ge \tau \}).
\]
For each $\varepsilon \in \left\{1, -1 \right\}^N$, let
\[
D_\varepsilon := \{ s \in [0,1]: r_j(s) = \varepsilon_j, \ 1 \le j \le N\}.
\]
The sets $D_\varepsilon$ are pairwise disjoint, each has measure $2^{-N}$, and they cover $[0,1]$ up to finitely many points. Moreover, $f_s = f_\varepsilon$ on $D_\varepsilon$. Consequently, \eqref{eq: outputbound} and \eqref{eq: sourcebound} give

\begin{equation}\label{eq: discavg}
    2^{-N}\sum_{\varepsilon \in \left\{1,-1\right\}^N }Y(\varepsilon) \ge \alpha_0^2 a, \qquad 2^{-N}\sum_{\varepsilon \in \left\{1, -1 \right\}^N } X(\varepsilon) \leq C_p^{\mathrm{av}} \rho(\scriptP)^{\theta_p}S^{1/p}.
\end{equation}
Set
\begin{equation*}
    \Lambda_p := C_p^{\mathrm{av}} \rho(\scriptP)^{\theta_p}S^{1/p}, \qquad c_*:= \frac{\alpha_0^2 a}{2 \Lambda_p}, \qquad F(\varepsilon) := Y(\varepsilon) - c_*X(\varepsilon).
\end{equation*}
It follows from \eqref{eq: discavg} that
\[
 2^{-N}\sum_{\varepsilon \in \left\{1, -1 \right\}^N } F(\varepsilon) \ge \alpha_0^2 a - c_* \Lambda_p = \frac{\alpha_0^2 a}{2} >0,
\]
since $\alpha_0^2, a >0$.

For $0 \le k \le N$ and $\eta = \left( \eta_1, \eta_2, \dots ,\eta_k \right) \in \left\{1, -1 \right\}^k$, define 
\[
\scriptA(\eta) : = 2^{-(N-k)}\sum_{\omega \in \left\{1, -1 \right\}^{N-k}} F\left(\eta_1, \dots, \eta_k, \omega_1, \dots ,\omega_{N-k} \right).
\]
For the empty vector $\emptyset$, this gives
\[
\scriptA(\emptyset) = 2^{-N}\sum_{\varepsilon \in \left\{1, -1 \right\}^N } F(\varepsilon) \ge \frac{\alpha_0^2 a}{2} >0.
\]
Suppose that $\varepsilon_1^*, \varepsilon_2^*, \dots , \varepsilon_k^*$ have been chosen for some $0 \le k <N$. Define 
\[
\scriptA_+:= \scriptA(\varepsilon_1^*, \varepsilon_2^*, \dots , \varepsilon_k^*,1), \qquad \scriptA_-:= \scriptA(\varepsilon_1^*, \varepsilon_2^*, \dots , \varepsilon_k^*,-1).
\]
Then set the next sign
\begin{equation}\label{eq: sign-recursion}
    \varepsilon_{k+1}^* := 
    \begin{cases}
        +1, &\scriptA_+ \ge \scriptA_-,\\
        -1, &\scriptA_- > \scriptA_+.
    \end{cases}
\end{equation}
Starting from the empty vector and applying this rule repeatedly, we determine all $N$ signs uniquely.

For every $\eta = \left( \eta_1, \eta_2, \dots ,\eta_k \right) \in \left\{1, -1 \right\}^k$, $0 \le k < N$,
\[
\scriptA(\eta) = \frac{1}{2}\scriptA(\eta,1)\ + \ \frac{1}{2}\scriptA(\eta,-1).
\]
Hence $\scriptA\left(\varepsilon_1^*, \dots , \varepsilon_{k+1}^*\right) \ge \scriptA\left(\varepsilon_1^*, \dots , \varepsilon_{k}^*\right)$ for $0 \le k < N$. Consequently,

\begin{equation}\label{eq: oularin}
  F\left(\varepsilon^*\right) = \scriptA\left(\varepsilon_1^*, \varepsilon_2^*, \dots , \varepsilon_N^*\right) \ge \scriptA \left(\emptyset\right) \ge \frac{\alpha_0^2 a}{2} > 0,  
\end{equation}
where $\varepsilon^*:= \left(\varepsilon_1^*, \varepsilon_2^*, \dots , \varepsilon_N^*\right)$.

Define
\begin{equation}\label{eq: main function}
    f_*(x) := f_{\varepsilon^*}(x) = \sum_{j = 1}^N \varepsilon_j^*e^{2 \pi i x \cdot \xi_j} \rchi_{Q_j}(x).
\end{equation}
By the definition of $F$ and \eqref{eq: oularin}, $Y(\varepsilon^*) > c_*X(\varepsilon^*)$. In particular, $Y(\varepsilon^*) > 0$, which implies $\hat{f_*} \not \equiv 0$, and hence $X(\varepsilon^*) = \|f_*\|_{L^{p,1}(\Rd)} >0$. 

Using the level $\tau/2$, we obtain
\begin{equation}\label{eq: fundamental1}
  \|\hat{f_*}\|_{L^{1, \infty}(\Omega, \mu)} \ge \frac{\tau}{2}Y\left(\varepsilon^*\right) > \frac{\tau c_*}{2}\|f_*\|_{L^{p, 1}(\Rd)}.
\end{equation}
For every measurable $H$ on $\Omega$, 
\begin{equation}\label{eq: lorentsctmnt}
  \|H\|_{L^{1, \infty}(\Omega, \mu)} \le a^{1/p'}\|H\|_{L^{p, \infty}(\Omega, \mu)}.  
\end{equation}
Indeed, if $ E_\lambda = \{\xi \in \Omega: |H(\xi)| > \lambda \}$, then
\[
\lambda \mu \left(E_\lambda\right) = \lambda \mu \left(E_\lambda\right)^{1/p}\mu \left(E_\lambda\right)^{1/p'} \le a^{1/p'}\|H\|_{L^{p, \infty}(\Omega, \mu)},
\]
and we take the supremum over $\lambda > 0$.

By the definition of $b_p(\scriptP)$, \eqref{eq: fundamental1} and \eqref{eq: lorentsctmnt} give the desired estimate:
\begin{equation}\label{eq: funmain}
    \|\hat{f_*}\|_{L^{p, \infty}(\Sigma, \mu)} \ge c_p\kappa(\scriptP) b_p(\scriptP) \rho(\scriptP)^{-\theta_p} \|{f_*}\|_{L^{p, 1}(\Rd)}.
\end{equation}
Here $c_p>0$ is a constant depending only on $p$.

\subsection{A deterministic construction of sets}\label{subsec: selection}

Fix an enumeration $\mathbb{Q}_+ = \{q_1, q_2, \dots\}$ of the positive rationals. We first briefly discuss the key steps of the final selection procedure, producing the desired set. The full details are recorded in the following proposition in a general setting.

First, write $f_* = h_1 -h_2 +i(h_3 -h_4)$ , where  $ h_1 := (\text{Re} f_*)_+$, $h_2 := (\text{Re} f_*)_-$, $h_3 := (\text{Im} f_*)_+$ and $h_4:=(\text{Im} f_*)_-$. For $1 \le m \le 4$ and $\lambda>0$, we set $  E_{m, \lambda} := \{ h_m > \lambda\}$ and show that there exist $m_0 \in \{ 1,2,3,4\}$ and $\lambda_0 >0$ such that $|E_{m_0, \lambda_0}|>0$ and 
    \[
    \frac{\|\hat{\rchi_{E_{m_0, \lambda_0}}}\|_{L^{p, \infty}\left(\Sigma, \mu\right)}}{\abs*{E_{m_0, \lambda_0}}^{1/p}} > \frac{1}{8p'}\frac{  \|\hat{f_*}\|_{L^{p, \infty}\left(\Sigma, \mu\right)}}{\|f_*\|_{L^{p, 1}(\Rd)}}.
    \]
 Next, we pass to a rational level by proving the existence of a $q_{\ell_0} \in \mathbb{Q}_+$ such that $|E_{m_0, q_{\ell_0}}|>0$ and
\begin{equation*} 
    \frac{\|\hat{\rchi_{E_{m_0, q_{\ell_0}}}}\|_{L^{p, \infty}\left(\Sigma, \mu\right)}}{\abs*{E_{m_0, q_{\ell_0}}}^{1/p}} \ge \frac{1}{8p'}\frac{\|\hat{f_*}\|_{L^{p, \infty}\left(\Sigma, \mu\right)}}{\|f_*\|_{L^{p, 1}(\Rd)}}.
\end{equation*}
Finally, we order the pairs in $\{ 1,2,3,4\} \times \mathbb{N}$ as
    \[
    (1,1),\ (2,1), \ (3,1),\ (4,1),\ (1,2),\ (2,2),\ (3,2),\ (4,2),\dots,
    \]
select the first pair $(m_*, \ell_*)$ in this ordering for which the above estimate holds, and take 
   $E(f_*) : = E_{m_*, q_{\ell_*}}$ as the uniquely determined set. 
\begin{proposition}\label{prop: exprop}
Fix $1<p<\infty$. Let $\Sigma\subset \Rd$ be a Borel set endowed with a finite Borel measure $\mu$, and let $U\subset \Rd$ be bounded and measurable. Assume $f$ is bounded and measurable, has support contained in $U$, and satisfies $\|\hat f\|_{L^{p,\infty}(\Sigma,\mu)}>0$. Then,  the set $E(f)$ obtained from the selection procedure described above is measurable, contained in $U$, has finite volume, and satisfies 
\[
\frac{\|\hat{\rchi_{E(f)}}\|_{L^{p,\infty}(\Sigma,\mu)}}{\abs*{E(f)}^{1/p}}
\ge \frac{1}{8p'}\,\frac{\|\hat f\|_{L^{p,\infty}(\Sigma,\mu)}}{\|f\|_{L^{p,1}(\Rd)}}.
\]
\end{proposition}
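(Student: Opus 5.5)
We prove \Cref{prop: exprop} by the layer-cake decomposition of the four nonnegative parts $h_1,\dots,h_4$ of $f$, combined with the sublinearity of the weak norm. Measurability, containment in $U$ and finiteness of volume are immediate once the selection is shown to terminate: each $E_{m,\lambda}=\{h_m>\lambda\}$ is measurable and contained in $\supp f\subset U$, and $U$ is bounded. So the whole content is to show that \emph{some} pair $(m,\ell)$ satisfies the required estimate. The first pair in the fixed ordering is then well defined, and it satisfies the inequality by construction.

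\textbf{Step 1 (layer cake).} Since $f$ is bounded with bounded support, $f\in L^1$, and for each $m$ we have $h_m(x)=\int_0^\infty \rchi_{E_{m,\lambda}}(x)\,\dd\lambda$. Fubini then gives, pointwise in $\xi$,
\[
\hat h_m(\xi)=\int_0^\infty \hat{\rchi_{E_{m,\lambda}}}(\xi)\,\dd\lambda .
\]
Suppose, for contradiction, that for every $m$ and every $\lambda$ with $|E_{m,\lambda}|>0$ we had
\[
\|\hat{\rchi_{E_{m,\lambda}}}\|_{L^{p,\infty}}\le \beta|E_{m,\lambda}|^{1/p}, \qquad \beta:=\frac{1}{8p'}\frac{\|\hat f\|_{L^{p,\infty}}}{\|f\|_{L^{p,1}}}.
\]
When $|E_{m,\lambda}|=0$ the transform vanishes, so the bound holds trivially there as well.

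\textbf{Step 2 (weak norm of an integral).} The space $L^{p,\infty}$ with $p>1$ is normable: the norm $\sup_F \mu(F)^{-1/p'}\int_F|H|\,\dd\mu$ is equivalent to the quasinorm, with constant $p'$. Minkowski's integral inequality for this norm gives
\[
\|\hat h_m\|_{L^{p,\infty}}\le p'\int_0^\infty\|\hat{\rchi_{E_{m,\lambda}}}\|_{L^{p,\infty}}\,\dd\lambda\le p'\beta\int_0^\infty|E_{m,\lambda}|^{1/p}\,\dd\lambda=p'\beta\|h_m\|_{L^{p,1}}.
\]
We have $\|h_m\|_{L^{p,1}}\le\|f\|_{L^{p,1}}$ since $h_m\le|f|$. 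Writing $\hat f=\hat h_1-\hat h_2+i(\hat h_3-\hat h_4)$ and using the normed version again, we obtain
\[
\|\hat f\|_{L^{p,\infty}}\le 4p'\beta\|f\|_{L^{p,1}}=\tfrac12\|\hat f\|_{L^{p,\infty}},
\]
after absorbing the constant comparisons; the $8$ in the statement is chosen exactly to leave room for them. This contradicts $\|\hat f\|_{L^{p,\infty}}>0$. Hence there exist $m_0$ and $\lambda_0$ with $|E_{m_0,\lambda_0}|>0$ and strict inequality. One must track the constants carefully, using $\|\cdot\|_{L^{p,\infty}}\le\|\cdot\|_{\text{norm}}\le p'\|\cdot\|_{L^{p,\infty}}$.

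\textbf{Step 3 (rational level).} This is the main obstacle. Take rationals $q\downarrow\lambda_0$. Then $E_{m_0,q}\uparrow E_{m_0,\lambda_0}$, because $\{h>\lambda_0\}=\bigcup_{q>\lambda_0}\{h>q\}$. Consequently $|E_{m_0,q}|\to|E_{m_0,\lambda_0}|>0$, and by dominated convergence $\hat{\rchi_{E_{m_0,q}}}\to\hat{\rchi_{E_{m_0,\lambda_0}}}$ uniformly on $\Sigma$, since $\|\rchi_{E_{m_0,q}}-\rchi_{E_{m_0,\lambda_0}}\|_{L^1}\to0$. The weak quasinorm is lower semicontinuous under pointwise convergence, and $\mu$ is finite. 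Therefore
\[
\liminf_{q}\|\hat{\rchi_{E_{m_0,q}}}\|_{L^{p,\infty}}\ge\|\hat{\rchi_{E_{m_0,\lambda_0}}}\|_{L^{p,\infty}}.
\]
Given the \emph{strict} inequality from Step 2, some rational $q_{\ell_0}$ then satisfies the non-strict inequality with $|E_{m_0,q_{\ell_0}}|>0$. This is why the strictness in Step 2 matters. The set of admissible pairs is thus nonempty, the first one in the ordering exists, and $E(f)$ is well defined with the stated bound.
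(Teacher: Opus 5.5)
Your proof is correct and is essentially the paper's argument. It uses the same four-part layer-cake representation, Minkowski's inequality for the equivalent norm on $L^{p,\infty}$ (comparison constant $p'$) to get the factor $4p'$, and approximation of $\lambda_0$ from above by rational levels. Your contradiction framing, instead of the supremum $B$, is only a cosmetic variation. The same holds for your use of lower semicontinuity of the weak quasinorm under pointwise convergence, instead of the paper's two-sided continuity estimate under uniform perturbations; it is even slightly leaner.

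The one thing to tidy is the constant bookkeeping in Step 2. You should sum the bounds for the equivalent norm of each $\hat h_m$, which is what your Minkowski chain actually yields, rather than the bounds for its weak quasinorm. Otherwise you pay $p'$ twice and get no contradiction when $p'\ge 2$. Done this way you get exactly $\|\hat f\|_{L^{p,\infty}}\le 4p'\beta\|f\|_{L^{p,1}}$. The $8$ then supplies just the factor $2$ of slack needed for the strict inequality, not room for further norm comparisons.
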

 
\begin{proof}
Write $f = g_1 -g_2 +i(g_3 -g_4)$ ,  where  $ g_1 := (\text{Re} f)_+$, $g_2 := (\text{Re} f)_-$, $g_3 := (\text{Im} f)_+$, $g_4:=(\text{Im} f)_-$. Each $g_m$ is nonnegative, supported in $U$, and bounded above by $\abs*{f}$. For $1 \le m \le 4$ and $\lambda>0$, set
    \[
    E_{m, \lambda} := \left\{x \in \Rd: g_m(x) > \lambda\right\}.
    \]
Then $E_{m, \lambda} \subset U$, and hence $\abs*{E_{m, \lambda}} < \infty$.

Next, set
    \[
    B := \sup_{\substack{1 \le m \le 4,\ \lambda >0\\\abs*{E_{m, \lambda}} >0}} \frac{\|\hat{\rchi_{E_{m, \lambda}}}\|_{L^{p, \infty}\left(\Sigma, \mu\right)}}{\abs*{E_{m, \lambda}}^{1/p}}.
    \]
    Since $f$ is nonzero, at least one $g_m$ is positive on a set of positive measure. Consequently, there exist $m \in \left\{ 1,2,3,4\right\}$ and $k \in \mathbb{N}$ such that $|E_{m, 1/k}| >0$. Hence, the family over which the supremum is taken is nonempty. Moreover, $B < \infty$. Indeed, for every measurable $F \subset U$, $|\hat{\rchi_F}(\xi)| \le \abs*{F}$ for $\xi \in \Sigma$, and therefore
    \[
    \frac{\|\hat{\rchi_F}\|_{L^{p, \infty}\left(\Sigma, \mu\right)}}{\abs*{F}^{1/p}} \le \mu \left(\Sigma\right)^{1/p}\abs*{F}^{1/p'} \le \mu \left(\Sigma\right)^{1/p}\abs*{U}^{1/p'}.
    \]
    Let $c_1 = 1$, $c_2 = -1$, $c_3 = i$, $c_4 = -i$. Since $g_m \ge 0$,  for every $x$, 
    \[
    g_m(x) = \int_0^\infty \rchi_{E_{m, \lambda}}(x)\, \dd \lambda.
    \]
   Since each $g_m$ is integrable, Fubini's theorem gives
    \[
    \hat{f} (\xi) = \sum_{m=1}^4c_m \int_0^\infty \hat{\rchi_{E_{m, \lambda}}}(\xi)\, \dd \lambda.
    \]
    Using an equivalent norm on $L^{p, \infty}$ \cite[Exercise 1.1.12]{Grafakos} and Minkowski's integral inequality, together with the definition of $B$, and the monotonicity of Lorentz norms, we obtain
    \[
    \|\hat{f}\|_{L^{p, \infty}\left(\Sigma, \mu\right)} \le p' \sum_{m=1}^4 \int_0^\infty \|\hat{\rchi_{E_{m, \lambda}}}\|_{L^{p, \infty}\left(\Sigma, \mu\right)}\, \dd \lambda \le 4p'B  \|f\|_{L^{p, 1}(\Rd)}.
    \]
    Therefore,
    \[
        B \ge \frac{1}{4p'} \frac{  \|\hat{f}\|_{L^{p, \infty}\left(\Sigma, \mu\right)}}{\|f\|_{L^{p, 1}(\Rd)}}.
    \]
    By the definition of $B$, there exist $m_0 \in \left\{ 1,2,3,4\right\}$ and $\lambda_0 >0$ such that $\abs*{E_{m_0, \lambda_0}}>0$ and 
    \[
    \frac{\|\hat{\rchi_{E_{m_0, \lambda_0}}}\|_{L^{p, \infty}\left(\Sigma, \mu\right)}}{\abs*{E_{m_0, \lambda_0}}^{1/p}} > \frac{1}{8p'}\frac{  \|\hat{f}\|_{L^{p, \infty}\left(\Sigma, \mu\right)}}{\|f\|_{L^{p, 1}(\Rd)}}.
    \]
    Choose a sequence $(\lambda_r)_{r \ge 1} \subset \mathbb{Q}_+$ such that $\lambda_r > \lambda_0$ and $\lambda_r \downarrow \lambda_0$. Then $E_{m_0, \lambda_r} \uparrow E_{m_0, \lambda_0}$. Continuity from below gives $|E_{m_0, \lambda_r}| \longrightarrow |E_{m_0, \lambda_0}|$ and, consequently, $|E_{m_0, \lambda_0} \setminus E_{m_0, \lambda_r}| \longrightarrow 0$. Moreover, for every $\xi \in \Sigma$,
    \[
     |\hat{\rchi_{E_{m_0, \lambda_r}}}(\xi) - \hat{\rchi_{E_{m_0, \lambda_0}}}(\xi)| \le |E_{m_0, \lambda_0} \setminus E_{m_0, \lambda_r}|.
    \]
    If $H$ and $K$ are measurable on $\Sigma$ and $|H-K|\le \eta$ on $\Sigma$ pointwise, then
 \begin{equation}\label{eq: normcontinuity}
     |\|H\|_{L^{p, \infty}\left(\Sigma, \mu\right)}- \|K\|_{L^{p, \infty}\left(\Sigma, \mu\right)}| \le \eta \mu\left(\Sigma\right)^{1/p}.
 \end{equation}
    Indeed, if $\alpha > \eta$, then $\{|H|>\alpha\} \subset \{|K|>\alpha - \eta\}$, and therefore
    \[
    \begin{aligned}
          \alpha \mu \left\{\abs*{H}>\alpha\right\}^{1/p} &\le  \left(\alpha-\eta\right) \mu \left\{\abs*{K}>\alpha - \eta\right\}^{1/p} \ + \ \eta \mu \left\{\abs*{K}>\alpha - \eta \right\}^{1/p}
          \\
          &\le \|K\|_{L^{p, \infty}\left(\Sigma, \mu\right)} \ + \ \eta \mu\left(\Sigma\right)^{1/p}.
    \end{aligned}
  \]
  For $0<\alpha \le \eta$, the LHS is at most $\eta \mu\left(\Sigma\right)^{1/p}$. Taking the supremum over $\alpha > 0$ gives
  \[
  \|H\|_{L^{p, \infty}\left(\Sigma, \mu\right)} \le \|K\|_{L^{p, \infty}\left(\Sigma, \mu\right)}\ + \ \eta \mu\left(\Sigma\right)^{1/p}.
  \]
  Interchanging the roles of $H$ and $K$ proves \eqref{eq: normcontinuity}.

  It follows that
  \[
    \frac{\|\hat{\rchi_{E_{m_0, \lambda_r}}}\|_{L^{p, \infty}\left(\Sigma, \mu\right)}}{\abs*{E_{m_0, \lambda_r}}^{1/p}}  \longrightarrow\frac{\|\hat{\rchi_{E_{m_0, \lambda_0}}}\|_{L^{p, \infty}\left(\Sigma, \mu\right)}}{\abs*{E_{m_0, \lambda_0}}^{1/p}}.
  \]
  Therefore, there exist $m \in \left\{1, 2, 3, 4\right\}$ and $q_{\ell} \in \mathbb{Q}_+$ such that

\begin{equation} \label{eq: fundamental2}
      \abs*{E_{m, q_{\ell}}}>0, \qquad \frac{\|\hat{\rchi_{E_{m, q_{\ell}}}}\|_{L^{p, \infty}\left(\Sigma, \mu\right)}}{\abs*{E_{m, q_{\ell}}}^{1/p}} \ge \frac{1}{8p'}\frac{\|\hat{f}\|_{L^{p, \infty}\left(\Sigma, \mu\right)}}{\|f\|_{L^{p, 1}(\Rd)}}.
\end{equation}
Order the pairs in $\left\{ 1,2,3,4\right\} \times \mathbb{N}$ as
    \[
    (1,1),\ (2,1), \ (3,1),\ (4,1),\ (1,2),\ (2,2),\ (3,2),\ (4,2),\dots.
    \]
   Let $(m_*, \ell_*)$ be the first pair in this ordering for which \eqref{eq: fundamental2} holds, and define 
   \[
   E(f) : = E_{m_*, q_{\ell_*}}.
   \]
   Thus $E(f)$ is uniquely determined, is contained in $U$, and has positive finite volume. The asserted inequality now follows.
\end{proof}

 \begin{proof}[Proof of \Cref{thm: abstractthm}]

 The sign-selection rule \eqref{eq: sign-recursion} produces the uniquely determined function $f_*$ defined in \eqref{eq: main function}. This function is bounded, measurable, supported in the bounded set $U$, and $\|\hat{f}_*\|_{L^{p, \infty}\left(\Sigma, \mu\right)}>0$. We may therefore apply \Cref{prop: exprop} to $f_*$ and define
 \[
 E\left( \scriptP\right) := E\left(f_*\right).
 \]
Then $ E\left( \scriptP\right) \subset U$ is bounded and measurable, with $0< \abs*{E\left( \scriptP\right)}<\infty$. By \eqref{eq: funmain},
\[
  \frac{\|\hat{\rchi_{E(\scriptP)}}\|_{L^{p, \infty}\left(\Sigma, \mu\right)}}{\abs*{E(\scriptP)}^{1/p}} \ge \frac{1}{8p'}  \frac{\|\hat{f_*}\|_{L^{p, \infty}\left(\Sigma, \mu\right)}}{\|f_*\|_{L^{p,1}(\Rd)}} \ge  c_p\kappa(\scriptP) b_p(\scriptP) \rho(\scriptP)^{-\theta_p}.
\]
This completes the proof.
\end{proof}

\section{Applications}\label{sec: sec3}
\subsection{Sphere}\label{sec: secsph}
We use C\'ordoba's sprouting construction in \cite{Cordoba} starting from the triangle with vertices at $(\frac{-1}{2}, 0)$, $(\frac{1}{2}, 0)$, $(0,1)$. At stage $m$, fix an ordering of the $2^m$ triangles and denote them by $T_{m,1},T_{m,2}, \dots ,T_{m,2^m}$. For each $j$, write
$$
T_{m,j} = \operatorname{conv}\left\{A_{m,j}, B_{m,j}, C_{m,j}\right\},
$$
where the vertices are labeled so that $A_{m,j}B_{m,j}$ is the base, $A_{m,j}C_{m,j}$ is the chosen long side, and $\left(B_{m,j} - A_{m,j}\right) \cdot \left(C_{m,j} - A_{m,j}\right) \ge 0$. For each $1 \le j  \le 2^m$, define 
\[
L_{m,j} : = \abs*{C_{m,j} - A_{m,j}}, \qquad \omega_{m,j} := \frac{C_{m,j} - A_{m,j}}{\abs*{C_{m,j} - A_{m,j}}}.
\]
The sprouting construction gives a constant $C_{\mathrm{spr}}> 0$, independent of $m$, such that for every $m \ge 2$, 
\begin{equation*}
    \abs*{B_{m,j}-A_{m,j}} = 2^{-m}, \qquad \abs*{C_{m,j}-A_{m,j}} \ge  m,
\end{equation*}
and
\begin{equation}\label{eq: triun}
    |\bigcup_{j = 1}^{2^m}T_{m,j}| \le C_{\mathrm{spr}} \log m.
\end{equation}
Moreover, Beckner, Carbery, Semmes, and Soria~\cite[Lemma~3]{BCSS} show that, for sufficiently large $m$, there exists a constant $c_{\mathrm{sep}} > 0$, independent of $m$, such that 
\begin{equation}\label{eq: dirsep}
    \abs*{\omega_{m,j} - \omega_{m,k}} \ge \frac{c_{\mathrm{sep}}}{m2^m}, \qquad 1 \le j \neq k \le 2^m.
\end{equation}
Let $m$ be sufficiently large.  For each $j$, let $\tau_{m,j}$ be the unique unit vector perpendicular to $\omega_{m,j}$ such that
\[
(B_{m,j} - A_{m,j}) \cdot\tau_{m,j} > 0.
\]
Put
\[
a_{m,j} := (B_{m,j} - A_{m,j}) \cdot\omega_{m,j}, \qquad d_{m,j} := (B_{m,j} - A_{m,j}) \cdot\tau_{m,j}.
\]
By the choice of the vertex labeling, $0 \le a_{m,j} \le 2^{-m}$, the construction also gives a constant $c_1 > 0 $, independent of $m$ and $j$, such that $d_{m,j} \ge c_12^{-m}$. Fix such a $c_1$.
Writing  $T_{m,j}$ as
\begin{equation*}\label{eq: trieq}
\begin{aligned}
       T_{m,j} = A_{m,j} + \{ x \omega_{m,j} + y\tau_{m,j}:{}\;&
        0 \le y \le d_{m,j},\\
      & \frac{ a_{m,j}}{ d_{m,j}}y \le x \le L_{m,j} + \frac{ a_{m,j} - L_{m,j}}{ d_{m,j}}y \}
\end{aligned}
\end{equation*}
we set 
$$
Q_{m,j}^1 :=  A_{m,j} + \{x \omega_{m,j} + y\tau_{m,j}: \frac{L_{m,j}}{8} \le x  \le \frac{L_{m,j}}{4} \, \text{and} \, \frac{d_{m,j}}{4} \le y \le \frac{d_{m,j}}{2} \}.
$$
Thus, we have that $ Q_{m,j}^1 \subset  T_{m,j}$.

Let $c: =\min\{c_1/4, 1/8\}$, and  inside $ Q_{m,j}^1$, define the subrectangle 
\begin{equation*}
        \begin{aligned}
       Q_{m,j}^2 =  A_{m,j} + \{  x \omega_{m,j} + y\tau_{m,j}: \frac{L_{m,j}}{8} \le x & \le \frac{L_{m,j}}{8}+cm,\\
       &\frac{d_{m,j}}{4} \le y \le \frac{d_{m,j}}{4} + c2^{-m}\}.
\end{aligned}
\end{equation*}
Each $ Q_{m,j}^2$ has dimensions $cm \times c2^{-m}$.

Next, we consider the following centered version of $Q_{m,j}^2$ given by
\[
Q_{m,j,c}^2:= \{  x \omega_{m,j} + y\tau_{m,j}: |x| \le \frac{cm}{2}, \ |y| \le \frac{c2^{-m}}{2}\}.
\]
Since $Q_{m,j}^2 \subset T_{m,j}$, it follows from \eqref{eq: triun} that
\begin{equation}\label{eq: compresssphere}
    \frac{|\bigcup_{j=1}^{2^m} Q_{m,j}^2|}{\sum_{j=1}^{2^m} |Q_{m,j}^2|} \le C_0 \frac{\log m}{m}, \qtq{where} C_0 : = \frac{C_{\mathrm{spr}}}{c^2}.
\end{equation}
Set $\delta_m : = 1/(m2^m)$ and $r_m : = m2^{2m}$. Write $\omega_{m,j} = (\omega_{m,j}^1, \omega_{m,j}^2)$,\ $\tau_{m,j} = (\tau_{m,j}^1, \tau_{m,j}^2)$, and define
\[
\xi_{m,j} : = ( \omega_{m,j}^1, \omega_{m,j}^2,0, \dots, 0) \in \Sk, \qquad \xi_{m,j} ^\perp: = ( \tau_{m,j}^1, \tau_{m,j}^2,0, \dots, 0) \in \Sk.
\]
Let 
\[
I_m = [ -\frac{c}{2} \delta_m^{-1}, \frac{c}{2} \delta_m^{-1}]^{d-2}.
\]
When $d = 2$, we take the measure of $I_m$ to be $1$. Define
\[
Q_{m,j,c}^3:=r_mQ_{m,j,c}^2 \times I_m \subset \Rd.
\]
If $\lambda_{m,j,0} : = (\lambda_{m,j,0}^1,\lambda_{m,j,0}^2)$ is the center of $Q_{m,j}^2$, define
\[
 \widetilde\lambda_{m,j} : = (r_m\lambda_{m,j,0}^1,r_m\lambda_{m,j,0}^2,0,\dots,0) \in \Rd. \qquad Q_{m,j}^3:=  \widetilde\lambda_{m,j} + Q_{m,j,c}^3.
\]
Take $\eta := 1/(6cd)$ and define
\[
 Q_{m,j} := \eta Q_{m,j}^3, \qtq{and}  \lambda_{m,j} := \eta \widetilde\lambda_{m,j}.
\]
Thus 
\begin{equation}\label{eq: Qmj}
    \begin{aligned}
 Q_{m,j} =  \lambda_{m,j} + \{s_\parallel \xi_{m,j} + s_\perp \xi_{m,j}^\perp + \sum_{k=3}^d s_ke_k:\ & |s_\parallel| \le \frac{c\eta}{2}\delta_m^{-2}, |s_\perp| \le \frac{c\eta}{2}\delta_m^{-1}, \qtq{and} \\
 & \abs*{s_k} \le \frac{c\eta}{2}\delta_m^{-1}, \ 3 \le k \le d\}.
 \end{aligned}
\end{equation}
Using \eqref{eq: compresssphere}, we get
\begin{equation}\label{eq: sphcomp}
    |\bigcup_{j = 1}^{2^m}Q_{m,j}| \le \rho_mS_m, \qquad \rho_m: = C_0 \frac{\log m}{m}.
\end{equation}
For $1 \le j \le 2^m$, define the spherical cap
\[
\Theta_{m,j}: = \{ \xi \in \Sk: \abs*{\xi - \xi_{m,j}} \le \delta_m\}.
\]
In particular, $\xi_{m,j} \in \Theta_{m,j}$. Set
\[
\Omega_m := \bigcup_{j=1}^{2^m} \Theta_{m,j}, \qquad a_m := \sigma (\Omega_m).
\]
By \eqref{eq: dirsep}, whenever $j \neq k$, $\abs*{\xi_{m,j} - \xi_{m,k}} \ge c_{\mathrm{sep}} \delta_m$. Hence, the caps $\Theta_{m,j}$ have bounded overlap: there exists a constant $M_d > 0$, independent of $m$, such that
\[
\sum_{j = 1}^{2^m} \rchi_{\Theta_{m,j}} \le M_d, \qquad \xi \in \Sk.
\]
We may, for example, take
\[
M_d := 1 + \big(1 + \frac{2}{c_{\mathrm{sep}}}\big)^d.
\]
There exists a constant $c_{\mathrm{cap}} > 0 $, depending only on $d$, such that $\sigma(\Theta_{m,j}) \ge c_\mathrm{cap}\delta_m^{d-1}$, and consequently,
\begin{equation}\label{eq: boverlapcon}
    a_m = \sigma(\Omega_m)
 \ge c_\Omega2^m\delta_m^{d-1}, \qquad  c_\Omega := \frac{c_{\mathrm{cap}}}{M_d}.
\end{equation}
 For $\xi \in \Theta_{m,j}$, write 
 \[
 \xi-\xi_{m,j} = t_\parallel \xi_{m,j} + t_\perp \xi_{m,j}^\perp + \sum_{k=3}^d t_ke_k.
 \]
It follows that
\begin{equation}\label{eq: capbox}
     |t_\parallel| \le \frac{1}{2}\delta_m^{2}, \qquad
 |t_\perp| \le \delta_m,  \qquad
 |t_k| \le \delta_m, \qquad \ 3 \le k \le d.
\end{equation}
For $x \in Q_{m,j}$, using \eqref{eq: Qmj}, we can write 
\[
 x-\lambda_{m,j} = s_\parallel \xi_{m,j} + s_\perp \xi_{m,j}^\perp + \sum_{k=3}^d s_ke_k,
\]
where
\begin{equation}\label{eq: sphfre}
  |s_\parallel| \le \frac{c\eta}{2}\delta_m^{-2},\qquad
 |s_\perp| \le \frac{c\eta}{2}\delta_m^{-1},\qquad
 \abs*{s_k} \le \frac{c\eta}{2}\delta_m^{-1}, \qquad 3 \le k \le d.  
\end{equation}
Using the orthonormality of the vectors\ $\xi_{m,j}$, $\xi_{m,j}^\perp$, $e_3$, $\dots$, $e_d$\ in $\Rd$ together with \eqref{eq: capbox} and \eqref{eq: sphfre}, we obtain
\[
\abs*{\left( x-\lambda_{m,j}\right) \cdot\left( \xi-\xi_{m,j}\right) } \le \frac{\left(2d-1\right)c\eta}{4}.
\]
We write $x = (x-\lambda_{m,j}) + \lambda_{m,j}$, use $\eta = 1/6cd$ and $\cos{(2 \pi y) \ge 1/2}$ for $|y| \le 1/6$ to get
\begin{equation}\label{eq: phasesphere}
     |\int_{Q_{m,j}} e^{-2 \pi i x \cdot \left( \xi - \xi_{m,j}\right)}\,\dd x| \ge \frac{1}{2} \abs*{Q_{m,j}}.
\end{equation}
Every $Q_{m,j}$ has the same volume 
$q_m := \abs*{Q_{m,j}} = (c \eta)^d\delta_m^{-(d+1)}$, and 
\[
S_m : = \sum_{j=1}^{2^m}\abs*{Q_{m,j}} = 2^m q_m.
\]
By \eqref{eq: boverlapcon},
\begin{equation}\label{eq: scalesphere}
    \frac{q_ma_m^{1/p_d}}{S_m^{1/p_d}} \ge c_\Omega^{1/p_d}q_m^{1-1/p_d}\delta_m^{(d-1)/p_d} = c_\Omega^{1/p_d}(c\eta)^{(d-1)/2}.
\end{equation}

\begin{proof}[Proof of \Cref{thm: sphere}]
     For sufficiently large $m$, define the ordered datum
\[
\scriptP_m^\mathbb{S} := \left\{\left(Q_{m,j}, \xi_{m,j}, \Theta_{m,j}\right)\right\}_{j = 1}^{2^m},
\]
where the order is inherited from the ordering of the triangles $T_{m,j}$.

Using the selection rule provided by \Cref{thm: abstractthm}, set $E_m^{\mathbb{S}} : = E(\scriptP_m^{\mathbb{S}})$. This prescribes the sequence  $(E_m^{\mathbb{S}})$. Using \eqref{eq: sphcomp}, \eqref{eq: phasesphere}, and \eqref{eq: scalesphere}, we obtain 
\[
 \rho(\scriptP_m^\mathbb{S}) \le C_0 \frac{\log m}{m}, \qquad \kappa(\scriptP_m^\mathbb{S}) \ge \frac{1}{2}, \qquad b_{p_d}(\scriptP_m^\mathbb{S}) \ge b_d,
\]
where $C_0 >0$ and $b_d := c_\Omega^{1/p_d}(c\eta)^{(d-1)/2} >0$ are independent of $m$. Since $\theta_{p_d} = 1/(2d)$, \Cref{thm: abstractthm} gives 
\[
\begin{aligned}
     \frac{\| \hat{\rchi_{E_m^{\mathbb{S}}}}\|_{L^{p_d, \infty}(\Sk, \sigma)}}{\abs*{E_m^{\mathbb{S}}}^{1/p_d}} &\ge c_{p_d}\kappa\left(\scriptP_m^\mathbb{S}\right) b_{p_d} \left(\scriptP_m^\mathbb{S}\right)\rho\left(\scriptP_m^\mathbb{S}\right)^{-1/(2d)} \ge C_\mathbb{S}\left(\frac{m}{\log m}\right)^{1/(2d)},
\end{aligned} 
\]
where $ C_\mathbb{S} :=  (c_{p_d}b_d/2)C_0^{-1/(2d)}$. Letting $m \rightarrow \infty$ finishes the proof of \Cref{thm: sphere}.
\end{proof}

 \subsection{Paraboloid}\label{sec: secpar}

To prove \Cref{thm: parabola}, we start with the family of triangles given in \cite{keich}. We choose a parallelogram inside each triangle, apply the same linear map to all these parallelograms, and take their Cartesian products with the same box. We then apply another linear map to all of the parallelepipeds and use the resulting family to form an ordered datum for the paraboloid.

 We first give the construction for $d \ge 3$. Choose a closed interval $J = [u,v]$, a point $\zeta'\in \R^{d-2}$, and $r_I>0$ such that $[u-r_I, v+r_I] \times(\zeta'+ [-r_I, r_I]^{d-2}) \subset \operatorname{int}I$. Let $\ell := v-u >0$. For every integer $m \ge 1$, set $\delta_m : = 2^{-m}$, $N_m := 2^m = \delta_m^{-1}$.

 For $0 \le k < N_m$, let $\varepsilon_i(k) \in \{0,1\}, 1\le i \le m$ be the unique binary digits such that $k = \sum_{i=1}^m \varepsilon_i(k) 2^{m-i}$, 
 and set
 \[
 s_{m,k} := k \delta_m = \sum_{i=1}^m \varepsilon_i(k)2^{-i}, \qquad b_{m,k} :=  \sum_{i=1}^m \frac{1-i}{m}  \varepsilon_i(k)2^{-i}.
 \]
 Define 
 \[
 p_{m,k}:= (0,b_{m,k}), \qquad v_{m,k} := \left(1,s_{m,k}\right), \qquad w_m:= \left(0,\ -\delta_m\right),
 \]
 and
 \[
 T_{m,k} :=  p_{m,k} + \left\{ \alpha v_{m,k} + \beta  w_m:  \alpha,\beta  \ge 0, \ \alpha+\beta \le 1 \right\}.
 \]
Equivalently, $ T_{m,k}$ is the triangle with vertices 
\[
\left(0,b_{m,k}\right), \qquad \left(0,b_{m,k} - \delta_m\right), \qquad \left(1 , b_{m,k} + s_{m,k}\right).
\]
The proof of \cite[Theorem 1]{keich}, with $n = m$, gives
\begin{equation}\label{eq: keichcomp}
    |\bigcup_{k = 0}^{N_m-1} T_{m,k}| < \frac{1}{m}.
\end{equation}
Inside $ T_{m,k}$, define the parallelogram
\[
 P_{m,k} :=  p_{m,k} + \{ \alpha v_{m,k} + \beta  w_m: \frac{1}{4} \le \alpha \le \frac{1}{2}, \ \frac{1}{4} \le \beta \le \frac{1}{2} \}.
\]
The containment $P_{m,k} \subset  T_{m,k}$ follows from $\alpha + \beta \le 1$. Moreover,
\[
|P_{m,k}| = \frac{\delta_m}{16}, \qquad  \sum_{k = 0}^{N_m-1}\abs*{P_{m,k}} = \frac{1}{16}.
\]
It follows from \eqref{eq: keichcomp} that
\begin{equation}\label{eq: keichcomppar}
    \frac{|\bigcup_{k = 0}^{N_m-1} P_{m,k}|}{ \sum_{k = 0}^{N_m-1}\abs*{P_{m,k}} } < \frac{16}{m}.
\end{equation}
Define $t_{m,k} := u+ \ell s_{m,k} = u + \ell k \delta_m$ for $0 \le k < N_m$. Note that $|t_{m,k}-t_{m,l}| = \ell \delta_m|k-l|$, and consider the invertible linear map $A_J(x_1,x_2) := (-2ux_1 - 2 \ell x_2, x_1)$. This map is independent of $k$, has determinant $2 \ell$, and 
\begin{equation}\label{eq: parquan}
    A_Jv_{m,k} = (-2t_{m,k}, 1), \qquad  A_Jw_m = 2 \ell \delta_m e_1.
\end{equation}
Define 
\[
 R_{m,k} : =  \delta_m^{-2}A_JP_{m,k}.
\]
Writing $\alpha = (3/8) +a,\ \beta = (3/8) +b$ for $\abs*{a}, \abs*{b} \le 1/8$, and using \eqref{eq: parquan}, we see that $R_{m,k}$ is a translation of
\[
R_{m,k,c} := \{ r \left( -2t_{m,k},1\right) + s_1e_1 : |r| \le \frac{1}{8}\delta_m^{-2}, \ |s_1| \le \frac{\ell}{4} \delta_m^{-1}\}.
\]
Since the same invertible linear map and dilation are applied to all the  parallelograms $P_{m,k}$, \eqref{eq: keichcomppar} gives
\begin{equation*}
    \text{Comp}(\{ R_{m,k}\}_{k=0}^{N_m-1}) < \frac{16}{m}.
\end{equation*}
For
\[
B_m:= \{\left(s_2, \dots, s_{d-1}\right) \in \R^{d-2}: \abs*{s_j} \le \frac{1}{4} \delta_m^{-1},\ 2 \le j \le d-1\},
\]
define $\widetilde Q_{m,k} : = \{(x_1, s_2, \dots, s_{d-1},x_d) \in \Rd: (x_1,x_d) \in R_{m,k},\ (s_2, \dots, s_{d-1})\in B_m\}$. Equivalently,  $\widetilde Q_{m,k} = \Phi(R_{m,k} \times B_m)$, where $\Phi: \R^2 \times \R^{d-2} \rightarrow \Rd$ is defined by $\Phi((a,b), (s_2, \dots, s_{d-1})) := (a,s_2, \dots, s_{d-1},b)$. Consequently, 
\[
\text{Comp}(\{ \widetilde Q_{m,k}\}_{k=0}^{N_m-1}) = \text{Comp}(\{ R_{m,k}\}_{k=0}^{N_m-1}).
\]
Define the linear map 
\[
L_{\zeta'}(x_1, x',x_d):=(x_1,x'-2\zeta'x_d,x_d),
\]
where $x':= (x_2, \dots, x_{d-1}) \in \R^{d-2}$. This map has determinant $1$, fixes $e_1, \dots ,e_{d-1}$, and satisfies $L_{\zeta'}(-2t_{m,k},0, \dots, 0, 1) = (-2t_{m,k}, -2\zeta',1)$. 

Let $Q_{m,k} := L_{\zeta'}(\widetilde Q_{m,k})$. Since the same map is applied to every $\widetilde Q_{m,k}$, we obtain
\begin{equation}\label{eq: geomcompar}
    \text{Comp}(\scriptQ_m^{\Gamma_{d}})< \frac{16}{m}, \qquad \scriptQ_m^{\Gamma_{d}}:= \{ Q_{m,k}\}_{k=0}^{N_m-1}.
\end{equation}
Let $\zeta_{m,k} := (t_{m,k}, \zeta') \in \R^{d-1}$. Every $Q_{m,k}$ is a translation of 
\[
\begin{aligned}
Q_{m,k,c}:= \{r\left(-2\zeta_{m,k},1\right)+\sum_{j=1}^{d-1} s_je_j:|r| \le \frac{1}{8} \delta_m^{-2},\ &|s_1| \le \frac{\ell}{4} \delta_m^{-1},\\
&|s_j| \le \frac{1}{4} \delta_m^{-1},\ 2\le j \le d-1\}.
\end{aligned}
\]
Every $Q_{m,k}$ has the same volume $q_m := |Q_{m,k}| = (\ell/2^{(d+1)})\delta_m^{-(d+1)}$.

Choose a positive $c_\Theta$ such that 
\begin{equation}\label{eq: cTheta}
 2c_\Theta < \ell, \qquad \frac{(d-1)c_\Theta^2}{8} +  \frac{(\ell +d-2) c_\Theta}{4} \le \frac{1}{6}.
\end{equation}
For all sufficiently large $m$, define
\[
I_{m,k}:= \left\{\zeta_{m,k}+\tau: |\tau_j| \le c_\Theta\delta_m,\ 1 \le j \le d-1\right\},\qquad 0\le k < N_m.
\]
These sets are pairwise disjoint and contained in $I$. Indeed, disjointness follows from $|t_{m,k}-t_{m,l}| = \ell \delta_m|k-l|$ and $ 2c_\Theta < \ell$, while containment follows from the choice of $J,\zeta'$, and $r_I$ once $m$ is sufficiently large. Set
\[
\Theta_{m,k} := \Gamma_d\left(I_{m,k}\right), \quad \xi_{m,k}:= \Gamma_d\left(\zeta_{m,k}\right), \quad \Omega_m:= \bigcup_{k=0}^{N_m-1} \Theta_{m,k}, \quad a_m := \mu_{\Gamma_d}\left(\Omega_m\right).
\]
Since the sets $I_{m,k}$ are pairwise disjoint, we have $a_m = (2c_\Theta)^{d-1}\delta_m^{d-2}$.

Since $t_{m,0} < \dots < t_{m,k} < \dots <t_{m, N_m-1}$, the index $k$ provides a natural ordering. For sufficiently large $m$, define the ordered datum
\[
\scriptP_m^{\Gamma_d} := \left\{\left(Q_{m,k}, \xi_{m,k}, \Theta_{m,k}\right)\right\}_{k = 0}^{N^m-1}.
\]
Let $\eta \in I_{m,k}$. Then $\eta = \zeta_{m,k} + \tau$, where $|\tau_j| \le c_{\Theta}\delta_m$ for $1 \le j \le d-1$. Write a point $x_c$ of $Q_{m,k,c}$ as 
\[
x_c = r\left(-2\zeta_{m,k},1\right)+\sum_{j=1}^{d-1} s_je_j.
\]
Since
\[
\Gamma_d(\eta) - \Gamma_d(\zeta_{m,k}) = (\tau, 2\zeta_{m,k} \cdot \tau + |\tau|^2),
\]
we have, by \eqref{eq: cTheta},
\begin{equation*}
    |x_c \cdot \left(\Gamma_d(\eta) - \Gamma_d(\zeta_{m,k})\right)| = |r|\tau|^2 + \sum_{j=1}^{d-1}s_j\tau_j| \le \frac{(d-1)c_\Theta^2}{8} +  \frac{(\ell+d-2) c_\Theta}{4} \le \frac{1}{6}.
\end{equation*}
This estimate together with $\cos{(2 \pi y) \ge 1/2}$ for $|y| \le 1/6$, gives
\begin{equation}\label{eq: phacopar}
  |\int_{Q_{m,k}} e^{-2 \pi i x \cdot \left(\Gamma_d(\eta) - \Gamma_d(\zeta_{m,k})\right)}\, \dd x| \ge \frac{1}{2} \abs*{Q_{m,k}}.  
\end{equation}
Put 
\[
S_m : = \sum_{k = 0}^{N_m-1}\abs*{Q_{m,k}} = N_mq_m.
\]
Since  $q_m = (\ell/2^{d+1})\delta_m^{-(d+1)},\ a_m = (2c_\Theta)^{d-1}\delta_m^{d-2}$, and $N_m = \delta_m^{-1}$, it follows that
\begin{equation}\label{eq: cribapar}
  q_ma_m^{1/p_d} = \beta_{d,I}S_m^{1/p_d}, \qquad \beta_{d,I} := \left(\frac{\ell}{2^{d+1}}\right)^{1-(1/p_d)} \left(2c_\Theta\right)^{(d-1)/p_d} > 0.  
\end{equation}
For $d=2$, choose a closed interval $J= [u,v]$ and $r_I>0$ such that $[u-r_I, v+r_I]\subset \operatorname{int}I$. Set $Q_{m,k} := R_{m,k}$ and $\zeta_{m,k} := t_{m,k}$. With these choices,  \eqref{eq: geomcompar}, \eqref{eq: phacopar}, and \eqref{eq: cribapar} also hold.

\begin{proof}[Proof of \Cref{thm: parabola}]
For each sufficiently large $m$, let $E_m^{\Gamma_d} := E(\scriptP_m^{\Gamma_d})$ be the set selected by \Cref{thm: abstractthm}. This prescribes the sequence $( E_m^{\Gamma_d})$. Using \eqref{eq: geomcompar}, \eqref{eq: phacopar}, and \eqref{eq: cribapar}, we obtain
\[
\rho(\scriptP_m^{\Gamma_d}) < \frac{16}{m}, \qquad \kappa(\scriptP_m^{\Gamma_d}) \ge \frac{1}{2}, \qquad b_{p_d}(\scriptP_m^{\Gamma_d}) = \beta_{d,I},
\]
where $\beta_{d,I}$ is independent of $m$. Since $\theta_{p_d} = 1/(2d)$, \Cref{thm: abstractthm} gives
\[
 \frac{\| \hat{\rchi_{E_m^{\Gamma_d}}}\circ \Gamma_d\|_{L^{p_d, \infty}(I,d\zeta)}}{|E_m^{\Gamma_d}|^{1/p_d}} \ge c_{p_d}\frac{1}{2}\beta_{d,I} \left(\frac{16}{m}\right)^{-1/(2d)}.
\]
Therefore, 
\[
\frac{\| \hat{\rchi_{E_m^{\Gamma_d}}}\circ \Gamma_d\|_{L^{p_d, \infty}(I,d\zeta)}}{|E_m^{\Gamma_d}|^{1/p_d}} \ge C_{\Gamma_d,I}m^{1/(2d)},
\]
where
\[
C_{\Gamma_d,I}:= c_{p_d}\frac{1}{2}\beta_{d,I}16^{-1/(2d)} >0
\]
depends on $d$ and $I$, but is independent of $m$. Letting $m \longrightarrow \infty$ proves \Cref{thm: parabola}.
\end{proof}

\section{Moment-curve obstruction} \label{sec : moment curve}

In this section, we prove \Cref{thm: momemntobstruct} and \Cref{cor: momcor}. For $1 \le r \le d$ and $t \in \R$, set
\[
X_r^t(x) := \gamma_d^{(r)}(t) \cdot x , \qquad x \in \Rd.
\]
We note that since $\gamma_d^{(k)}(t) \cdot u_r(t) = \delta_{kr}$ for $1 \le r, k \le d$, a point $x$ belongs to $Q_{\delta,t}(c)$ if and only if 
\begin{equation}\label{eq: intervals}
    |X_r^t(x)-X_r^t(c)| \le \delta^{-r}, \qquad 1 \le r \le d.
\end{equation}
Thus $X_r^t$ ranges over an interval of length $2\delta^{-r}$ on $Q_{\delta,t}(c)$. Moreover, $|Q_{\delta,t}(c)| \sim \delta^{-D}$.

\begin{lemma}\label{lem: dercoord}
    Let $s, t \in R$, put $h := s-t$, and let $1 \le r \le d$. Then 
\begin{equation}\label{eq: dertaylor}
    X_r^s(x) = \sum_{j = 0}^{d-r} \frac{h_j}{j!}X_{r+j}^t(x), \qquad x \in \Rd.
\end{equation}
\end{lemma}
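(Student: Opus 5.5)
The identity is Taylor's formula applied to the polynomial map $\gamma_d^{(r)}$. Since $\gamma_d(t)=(t,t^2,\dots,t^d)$ has polynomial coordinates of degree at most $d$, the vector-valued function $s\mapsto\gamma_d^{(r)}(s)$ is a polynomial in $s$ of degree at most $d-r$. Hence its Taylor expansion about $t$ terminates exactly, with no remainder:
\[
\gamma_d^{(r)}(s)=\sum_{j=0}^{d-r}\frac{h^j}{j!}\,\gamma_d^{(r+j)}(t),\qquad h=s-t.
\]
(The $h_j$ in the statement is read as $h^j$.) I would verify this coordinatewise. The $k$-th coordinate of $\gamma_d^{(r)}(s)$ is $\frac{k!}{(k-r)!}s^{k-r}$ for $k\ge r$ and $0$ otherwise. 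The binomial expansion of $s^{k-r}=(t+h)^{k-r}$ gives
\[
\frac{k!}{(k-r)!}\sum_{j=0}^{k-r}\binom{k-r}{j}h^j t^{k-r-j}=\sum_{j}\frac{h^j}{j!}\frac{k!}{(k-r-j)!}t^{k-r-j},
\]
which is precisely the $k$-th coordinate of the right-hand side. Terms with $r+j>k$ vanish on both sides, so extending the sum to $j\le d-r$ is harmless.

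Next I take the Euclidean inner product of this vector identity with a fixed $x\in\Rd$. By linearity of the dot product and the definition $X_r^t(x)=\gamma_d^{(r)}(t)\cdot x$, this yields \eqref{eq: dertaylor} directly.

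There is no real obstacle. The only points needing care are the index bookkeeping, namely that derivatives of order larger than $d$ vanish so the sum stops at $j=d-r$, and the convention $X_{r+j}^t$ with $r+j\le d$. Both are handled by the degree count above.
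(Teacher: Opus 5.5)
Your proposal is correct and follows the paper's proof: Taylor's formula for the polynomial $\gamma_d^{(r)}$ terminates at $j=d-r$, and taking the dot product with $x$ gives the identity. You also add a coordinatewise binomial check and correctly read the typo $h_j$ as $h^j$.
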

\begin{proof}
    Since $\gamma_d$ is a polynomial of degree $d$, Taylor's formula gives:
    \[
     \gamma_d^{(r)}(s) = \sum_{j = 0}^{d-r} \frac{h_j}{j!}\gamma_d^{(r+j)}(t).
    \]
    Taking the dot product with $x$ proves the claim.
\end{proof}

Applying the above formulas to the cases $r=d-2$ and $r=d-1$, respectively, yields
\begin{align}
    X_{d-2}^s &= X_{d-2}^t + hX_{d-1}^t + \frac{h^2}{2}X_{d}^t,
\label{eq: dmin2tayl}\\
     X_{d-1}^s &= X_{d-1}^t + hX_{d}^t.\label{eq: dmin1tayl}
\end{align}
Therefore, using \eqref{eq: dmin1tayl} to write $hX_{d}^t = X_{d-1}^s - X_{d-1}^t$ and inserting this into \eqref{eq: dmin2tayl}, we obtain
\begin{equation}\label{eq: dmin12tayl}
    X_{d-2}^s-X_{d-2}^t = \frac{h}{2}(X_{d-1}^t+X_{d-1}^s).
\end{equation}

\begin{proof}[Proof of \Cref{thm: momemntobstruct}(i)]

Fix $s, t \in \R$ with $s \ne t$, choose arbitrary $c_s, c_t \in \Rd$, and write
\[
Q_t := Q_{\delta,t}(c_t), \qquad Q_s := Q_{\delta,s}(c_s), \qquad h:= s-t.
\]
Consider the coordinate map
\[
\Phi_t : \Rd \longrightarrow \Rd, \qquad \Phi_t(x) := (X_1^t(x), \dots, X_d^t(x)).
\]
Let $E := \Phi_t(Q_t \cap Q_s) $. It follows that 
\begin{equation}\label{eq: comap}
    |E| \sim |Q_t \cap Q_s|.
\end{equation}
For $x \in Q_t \cap Q_s$, write a point of $E$ as $y = (y_1, \dots, y_d) = (X_1^t(x), \dots, X_d^t(x))$, and let $z_r := X_r^s(x), 1 \le r \le d$. By \eqref{eq: intervals}, each of $y_r$ and $z_r$ can lie in an interval of length at most $2\delta^{-r}$, although the centers of these intervals may depend on $c_t$ and $c_s$.

Fix a value of $y_{d-1}$. By \eqref{eq: dmin12tayl}, 
\[
z_{d-2} - y_{d-2} = \frac{h}{2}(y_{d-1} + z_{d-1}).
\]
It follows that, for this fixed value of $y_{d-1}$, the coordinate $z_{d-1}$ lies in an interval of length at most $\lesssim \frac{\delta^{-(d-2)}}{|h|}$.
On the other hand, \eqref{eq: dmin1tayl} gives $z_{d-1} = y_{d-1} + hy_d$. Therefore, for the same fixed value of $y_{d-1}$, the coordinate $y_d$ lies in an interval of length at most $\lesssim \frac{\delta^{-(d-2)}}{|h|^2}$. Thus, for $E^* := \{(y_{d-1}, y_d): (y_1, \dots, y_d) \in E\}$, by Fubini,
\[
\abs*{E^*} \lesssim \delta^{-(d-1)}\frac{\delta^{-(d-2)}}{|h|^2}.
\]
In consequence,
\[
|E| \lesssim (\prod_{r = 1}^{d-2}\delta^{-r})\abs*{E^*} \lesssim \frac{\delta^{-(D-2)}}{|h|^2}.
\]
Combining this with \eqref{eq: comap} proves part (i).
\end{proof}

\begin{proof}[Proof of \Cref{thm: momemntobstruct}(ii)]
    
If $T_\delta$ is empty, the assertion is immediate. Otherwise, for $N \ge 1$, write
\[
T_{\delta} = \left\{t_1 < \dots <t_N\right\}, \qquad Q_j := Q_{\delta, {t_j}}(c_{t_j}), \qtq{and} U := \bigcup_{j = 1}^NQ_j.
\]
Since $T_\delta$ is $\delta$-separated, $    \abs*{t_{j\mathbin{\pm}m}-t_j}\ge m\delta,\ m\ge1$, whenever $j\mathbin{\pm}m$ lies between $1$ and $N$. 

By part (i) and using the fact that $|Q_{\delta,t}(c)| \sim \delta^{-D}$ we obtain
\begin{equation*}\label{eq: quadcay}
    \abs*{Q_j \cap Q_{j \mathbin{\pm}m}} \lesssim \frac{\delta^{-(D-2)}}{(m\delta)^2} \sim \frac{\abs*{Q_j}}{m^2}.
\end{equation*}
Thus
$
    \sum_{\substack{1 \le k \le N \\k \ne j}}\abs*{Q_j \cap Q_k} \lesssim \abs*{Q_j}.
$
Consequently, the function $F(x) := \sum_{j = 1}^N\rchi_{Q_j}(x)$ satisfies
\begin{equation*}\label{eq: squmul}
    \int_{\Rd}F^2 \lesssim \sum_{j = 1}^N \abs*{Q_j}.
\end{equation*}
Note that $\supp(F)$ is contained in $U$, and so applying Cauchy-Schwarz we obtain
\[
( \sum_{j = 1}^N \abs*{Q_j})^2 = (\int_UF)^2 \le \abs*{U}\int_{\Rd}F^2 \leq  |U| \sum_{j = 1}^N \abs*{Q_j},
\]
 completing the proof of part (ii).
\end{proof}

 To summarize, in dimension $d\ge3$, whenever the points $t_1, \dots, t_N \in \R$ are $\delta$-separated, we have 
 \[
  \text{Comp}(\{Q_j\}_{j=1}^N) \sim1,
 \]
 uniformly over all translations. This is a fundamental difference between dimension two and higher. In dimension two, if $N\ge2$ and the points $t_1, \dots, t_N$ are $\delta$-separated, we have 
 \begin{equation}\label{eq: maindif}
      \text{Comp}(\{Q_j\}_{j=1}^N) \gtrsim \frac{1}{\log N}.
 \end{equation}
 Indeed, the corresponding intersection estimate gives an upper bound of order $|Q_j|/r$ when the indices differ by $r$. The argument used in the proof of \Cref{thm: momemntobstruct}(ii) then gives \eqref{eq: maindif}. This does not guarantee that every family has a compression ratio comparable to $1/(\log N)$; some may have a larger ratio. However, the family constructed in \Cref{sec: secpar} based on Keich's construction attains this order. Indeed, using \eqref{eq: maindif}, \eqref{eq: geomcompar}, and $N_m = 2^m$, we obtain
 \[
    \text{Comp}(\scriptQ_m^{\gamma_2}) \sim \frac{1}{\log N_m}.
 \]
 \begin{proof}[Proof of \Cref{cor: momcor}]
     
  Let $\scriptP = \{(Q_{\delta, t_j}(c_j), \gamma_d(t_j), \gamma_d(I_j)\}_{j = 1}^N$ be a single-scale datum as defined in \Cref{sec: momentobs}, and set
\[
q : = |Q_{\delta, t_j}(c_j)|, \qquad a := \mu_{\gamma_d}(\bigcup_{j=1}^N\gamma_d(I_j)) = |\bigcup_{j=1}^NI_j|.
\]
We have $q \sim \delta^{-D}$. For $r_d = 1 + 1/D$,
\begin{equation}\label{eq: compat}
    b_{r_d}\left(\scriptP\right)^{r_d} \sim \frac{|\bigcup_{j=1}^NI_j|}{N\delta}. 
\end{equation}
Choose a maximal $\delta$-separated subcollection of $\{t_j\}_{j = 1}^N$, and let $J$ be its index set and $M := |J|$. Maximality implies that for every $1 \le i \le N$, there exists some $j \in J$ such that $|t_i-t_j| < \delta$. Since $t_i \in I_i$ and $|I_i| \le A\delta$,
\[
\bigcup_{i = 1}^NI_i \subset \bigcup_{j \in J}\left[t_j-\left(A+1\right)\delta, t_j+\left(A+1\right)\delta\right].
\]
It follows from \eqref{eq: compat} that
\begin{equation}\label{eq: fun1}
      b_{r_d}\left(\scriptP\right)^{r_d} \lesssim_A \frac{M}{N}.
\end{equation}
Applying \Cref{thm: momemntobstruct}(ii) to the parallelepipeds corresponding to the indices in $J$, we get
\[
|\bigcup_{j = 1}^N Q_{\delta, t_j}(c_j)| \ge |\bigcup_{j \in J} Q_{\delta, t_j}(c_j)| \gtrsim Mq.
\]
Thus, $\rho\left(\scriptP\right) \gtrsim \frac{M}{N}$. Combining this with \eqref{eq: fun1} gives $b_{r_d}\left(\scriptP\right)^{r_d} \lesssim_A  \rho\left(\scriptP\right)$. Since $\kappa(\scriptP) \le 1$(recalling that $\theta_{r_d} = 1/r_d - 1/2$), we deduce
$$
\kappa(\scriptP) b_{r_d}(\scriptP) \rho(\scriptP)^{- \theta_{r_d}}  \lesssim_A \rho(\scriptP)^{1/r_d-\theta_{r_d}} = \rho\left(\scriptP\right)^{1/2} \lesssim_A 1.
 $$
In other words, for any sequence $(\scriptP_m)_{m\ge 1}$ of such data, the above holds uniformly in $m$, so the LHS cannot diverge. Moreover,
\[
 \kappa(\scriptP_m) b_{r_d}(\scriptP_m) \rho(\scriptP_m)^{-\theta_{r_d}} \longrightarrow 0 \qtq{as} \rho(\scriptP_m) \rightarrow0.
\]
This completes the proof.
\end{proof}

\section{Sharpness of the exponent \texorpdfstring{$-\theta_p$}{minus theta p} of \texorpdfstring{$\rho(\scriptP)$}{rho(P)}}\label{sec : sec5}

\begin{proof}[Proof of \Cref{prop: sharpness}]
For a fixed $N \ge 2$ and each $1 \le j \le N$, we set
\[
Q_{N,j} := [0,1]^d, \qquad \xi_{N,j} := 2^je_1, \qtq{and} \Theta_{N,j} := \left\{\xi_{N,j}\right\}
\]
where $e_1 = (1, 0, \ldots, 0)$ denotes the first basis vector. We equip the set $\Sigma_N:= \{\xi_{N,1}, \dots, \xi_{N,N}\}$ with counting measure $\mu_N$. Let us order the triples by increasing $j$, and consider $\scriptP_N = \left\{\left(Q_{N,j}, \xi_{N,j}, \Theta_{N,j}\right)\right\}_{j = 1}^N$. Thus, $U_N = [0, 1]^d,\ q = 1,\ S_N:=  \sum_{j = 1}^N \abs*{Q_{N,j}} = N$, and $a_N := \mu_N(\bigcup_{j=1}^N \Theta_{N,j}) = N$, and so
\[
\rho\left(\scriptP_N\right) = \frac{\abs*{U_N}}{S_N} = \frac{1}{N}, \qquad  b_p\left(\scriptP_N\right) = \frac{qa_N^{1/p}}{S_N^{1/p}} = 1.
\]
Moreover, we note that for the single point $\xi = \xi_{N,j} \in \Theta_{N,j}$,
\[
|\int_{Q_{N,j}} e^{-2 \pi i x \cdot (\xi - \xi_{N,j})}\,\dd x| = q,
\]
resulting in $\kappa(\scriptP_N) = 1$.

We now consider the upper bound. We first apply the standard lacunary-series inequality (see \cite[Theorem 3.6.4]{Grafakos}) and then H\"older's (recalling that $p' > 2$), to get
\[
\|\sum_{j=1}^N v_j e^{2 \pi i 2^j x_1}\|_{L^{p^\prime}([0, 1]^d)} \lesssim \left\|v\right\|_{\ell^2} \le N^{\theta_p}\left\|v\right\|_{\ell^{p^\prime}}
\]
for each $v = (v_1, \dots, v_N)$. As a result for the following vector valued operator
\[
\scriptR_Nf := \big(\int_{[0, 1]^d}f(x)e^{-2 \pi i 2^j x_1}\, \dd x\big)_{j = 1}^N
\]
we obtain the estimate $\left\|\scriptR_Nf\right\|_{\ell^p} \lesssim N^{\theta_p}\left\|f\right\|_{L^p([0, 1]^d)}$. This yields
\[
\|\hat{\rchi_E}\|_{L^{p,\infty}\left(\Sigma_N, \mu_N\right)} \le \left\|\scriptR_N\rchi_E\right\|_{\ell^p} \lesssim N^{\theta_p}\abs*{E}^{1/p}
\]
for each measurable $E \subset [0, 1]^d$. In other words,
\[
\sup_{\substack{E \subset U_N\\0 < \abs*{E} < \infty}} \frac{\|\hat{\rchi_E}\|_{L^{p,\infty}\left(\Sigma_N, \mu_N\right)}}{\abs*{E}^{1/p}} \lesssim N^{\theta_p} \sim \rho\left(\scriptP_N\right)^{-\theta_p}.
\]
This completes the proof.
\end{proof}

\section{Acknowledgments.}
The author is grateful to his PhD adviser, Chandan Biswas, for suggesting the problem studied in this work and for his invaluable guidance throughout the project. His many thoughtful suggestions contributed significantly to the development of this manuscript.



\bibliographystyle{plain}
\bibliography{fourierP1}


\end{document}